\newcommand\Ber{{\rm Ber}}
\newcommand\eqlaw{{\buildrel d \over = }}
\newcommand\Poi{{\rm Poi}}
\newcommand{\field}{{\mathbb F}}
\newtheorem{theorem}{Theorem}[section]
\newtheorem{lemma}{Lemma}[section]
\newtheorem{cor}{Corollary}[section]
\newtheorem{remark}{Remark}[section]
\newtheorem{example}{Example}[section]
\newcommand\polya{{P\' olya}}
\newcommand\convD{{\buildrel {\cal D} \over \longrightarrow}}
\newcommand\given{\, \vert \, }
\newcommand\Prob{{\mathbb P}}
\newcommand\E{{\mathbb E}}
\newcommand\V{{\mathbb V}{\rm ar}}
\newcommand\normal{{\cal N}}
\newcommand\indicator{{\mathbb I}}
\begin{document}
\begin{center}
{\bf \Huge Triggered urn models for frequently asked questions (FAQ)

\bigskip
\large
Irene Crimaldi\footnote{IMT School for Advanced Studies Lucca, Piazza San Ponziano 6, 55100 Lucca, Italy; Email: irene.crimaldi@imtlucca.it} \quad
Andrea Ghiglietti\footnote{Department of Statistics and Quantitative Methods, Università degli Studi di Milano-Bicocca, Piazza dell’Ateneo
Nuovo 1, 20126 Milano, Italy; Email: \linebreak andrea.ghiglietti@unimib.it} \quad
Leen Hatem\footnote{Department of Mechanical and Electrical Engineering, Damascus University, Damascus, Syria; Email: Leenhatem00@gmail.com} \quad
Hosam Mahmoud\footnote{Department of Statistics, The George Washington University, Washington, D.C. 20052, U.S.A.; Email: hosam@gwu.edu}

\bigskip
\today}
\end{center}
\begin{abstract}
 \fontsize{11pt}{11pt}\selectfont
{We investigate a nonclassic urn model with triggers that increase the number of colors. The scheme has emerged as a model for web services that set up frequently asked questions (FAQ). 

We present a thorough  asymptotic analysis of the FAQ urn scheme in generality that covers a large number of special
cases, such as Simon urn. For instance, we consider time dependent triggering probabilities. We identify regularity  conditions on these probabilities that classify the schemes into those where the number of colors in the urn remains almost surely finite or increases to infinity and conditions that tell us whether all the existing colors are observed infinitely often or not. We determine the rank curve, too. 

In view 
of the broad generality of the trigger probabilities, a spectrum of limit distributions appears, from central limit theorems to Poisson approximation, to power-laws, revealing connections to  Heap's exponent and Zipf's law. 

A combinatorial approach to the Simon urn is presented 
to indicate the possibility of such exact analysis, 
which is important for short-term predictions. Extensive simulations on real datasets (from Amazon sales) as well as computer-generated data clearly indicate that the asymptotic and exact theory developed agrees with practice.   }
\end{abstract}
\bigskip\noindent
{\bf Keywords:}  FAQ, database, urn, probabilistic analysis,
Gaussian law, Poisson approximation.

\bigskip\noindent
{\bf MSC:} {60F05, 
                           60C05, 
                           60G46}  
\section{Introduction}
Urn schemes are versatile modeling tools, as they capture the dynamics of
many real-world scenarios. For example, \polya\ urns have received 
phenomenal attention and have become a smashing success in many diverse applications.
The book~\cite{Mah2009} 
presents numerous applications in informatics and biosciences; the survey \cite{Bala} gives a wider perspective and goes over urns that are not of the \polya\ type.

Modern applications keep suggesting newer urn models. In this manuscript, we investigate
a non-classic urn model with occasional triggers. The model has emerged from some considerations in databases. The triggered urn scheme proposed is not a standard \polya\ urn model as the number of colors in it is increasing.
 A new inquiry into the database may be viewed as a novelty, which connects this investigation to a large
 body of research on innovation and novelties, wherein a verity of triggered urns 
 are used as models~\cite{ale-cri,ale-cri-ghi,Tria}. 

The rest of the paper is organized in sections. 
In Subsection~\ref{Subsec:database},
we discuss a common application related to service websites that set up databases for 
``frequently asked questions'' (FAQ).
The formal setup of the triggered urn underlying FAQ databases is given in Section~\ref{Sec:FAQ}, which has two subsections going over the rationale behind the 
dynamics of the urn and an illustrative
example. Section~\ref{Sec:analysis} is on the asymptotic probabilistic analysis of the FAQ triggered urn scheme, with 
three subsections ramifying the analysis into its limit distributions, stationary color distribution, and the frequency rank curve.  
A combinatorial approach to the FAQ urn, with fixed trigger probability across time, is discussed in Section~\ref{Sec:Simon}, allowing for calculations after a small-to-medium
number of time steps, which is important for the short-term revisions of the FAQ database. 
A subsection is dedicated to a brief discussion of dynamic variations of Simon urn. In Section~\ref{Sec:conc},
we discuss applications to real datasets from Amazon. The appendix reports on extensive simulations on real datasets as well as computer-generated data that clearly indicate that the theory developed agrees with practice.  
\subsection{A database for inquiries}
\label{Subsec:database}
Many public services set up a database of frequently asked questions (FAQ), and provide answers 
as guidance to the public. For example, the State Department of the United States has a 
website named Education USA. The site can be accessed at 

\bigskip
https://educationusa.state.gov/experience-studying-usa/
\\us-educational-system/frequently-asked-questions-faqs

\bigskip
The site provides 37 question/answer (Q/A) pairs, the  first two of which are
 
 \bigskip
Q: There are so many schools in the U.S. How do I decide which schools to apply to?

\medskip
A: Research your options and define your priorities. Contact the Education USA advising center nearest you and browse college search engines online. Check to see if the schools 
you are considering are accredited.

\bigskip
Q: What’s the difference between a college and a university?

\medskip
A: Colleges offer only undergraduate degrees while universities offer graduate degrees as well, but the terms are often used interchangeably.

\bigskip
Some FAQ databases grow as the questions are asked. The first question establishes the database.
The question may repeat for a while, till a new question is posed, at which point 
the FAQ maintenance team adds the question to the FAQ database and provides an answer. The process goes
on, with the questions repeating. When a new question is asked, an additional Q/A pair
is inserted at the bottom of the FAQ database.
 
Important FAQ databases are sanitized. For example,
the Education USA site mentioned does not have repeated questions. For modern FAQ
databases, the AI in the backdrop vets the 
incoming questions. A question presented does not have to be an exact word-for-word 
duplicate of an existing question to be considered the same. The AI considers a question the same
as an existing one, if it has sufficient similarity, such as agreeing on a high percentage
of keywords in the question.
  
With future revisions in mind, the database maintenance team wishes to keep track of how many times any particular question is asked. The purpose is to revise and improve the database, by, 
for example, putting the most frequent questions near the top of the database, to ease the search by the public, or writing more accurate answers to them after further research on the issues. 
  
We can anticipate an observation/evaluation period during which questions are collected, 
so as to revise the database for faster Q/A retrieval or better appearance at a future time. 
\section {The FAQ triggered urn model}
\label{Sec:FAQ}
In this section, we construct a FAQ triggered urn. 
Questions asked are represented by colored balls in an urn,  each color corresponds to
a question and the number of balls of a color reflects in some way the frequency 
or importance of the corresponding question. Initially, the urn starts empty at time 0.
The first trigger is always a  success, adding balls of color 1   
to represent the first question. Later questions appear upon successful 
 triggering events that change the color composition of the urn. So, later questions are represented 
by additional colors (one new color for each new question).
 
Formally, the urn starts empty (at time $n=0$). 
Set $B_0=1$ and let $(B_n)_{n\ge 1}$ be a sequence of independent Bernoulli random variables  with success probabilities 
$p_n\in (0,1)$. Think of the event  
$B_{n-1}=1$ as a trigger at time $n-1$ that
causes a kaleidoscopic change at time $n$.
Let $F$ be a strictly increasing nonnegative 
function defined on~$[1,\infty$) 
with $F(1)>0$. We refer to this function as the {\em update function} of the model. 

In the context of balls and urns, the update function is, of course, an integer. However,
toward broader generality, we take it as a real function. Via this extension, the number of balls remains an integer, but each ball color has a ``weight.'' 
This only affects the probability of drawing
from a particular color, which is the proportion of weight allocated to that color with respect to the total weight in the urn.

The model works as follows: 
At time $n\ge 1$, the urn evolves: 
\begin{itemize}
\item if $B_{n-1}=1$, a triggering event occurs:  We add to the urn $F(1)$ balls of a new color (i.e.~not already present in the urn), and we register this new color in the {\em history sequence} $\mathcal{S}$ (note that we label the colors according to the order of their appearance: color $1$ is the first to appear, color $2$ is the second to appear and so on). 
\item if $B_{n-1}=0$ (no triggering), proceed normally: We extract one ball from the urn and
	we register its color in $\cal S$. Suppose the color of the ball drawn  is $c$, 
	the number of balls of color $c$ in the urn becomes $F(K_{n,c})$, where 
	$K_{n,c}$ denotes the number of observations of color
	$c$ (the number of times color $c$ has been registered in $\mathcal S$) 
    by time $n$ (included). 
	\end{itemize}

 The number of colors in this urn is growing, and it is not a standard \polya\ urn. 
Another essential difference from \polya\ urns is that in a \polya\ urn
scheme, there is   a drawing of a ball (or a group of balls) 
 at {\em each} epoch of time.
In the FAQ urn scheme, at any epoch a Bernoulli random variable is generated,
and a ball drawing is affected {\em only} in the case of failure (when the generated 
Bernoulli random variable is 0). In the case of success  (a triggering event occurs, when the generated Bernoulli random variable is 1), no ball drawing takes place.

Perhaps the closest urn scheme to the present investigation is Simon model~\cite{Simon}, in which $p_n=p$ is fixed and $F(x) =x$, for all $x\ge 1$. In this special case, if $B_n=0$, 
we sample a ball from the urn and register its color in the history sequence $\mathcal S$. 
Suppose the color of the ball sampled at time $n$ is $c$, and the last time it appeared in the history sequence is $1\le n'<n$.
With the choice of the update 
function, we have $F(K_{n,c}) - F(K_{n',c}) = K_{n,c} - K_{n',c}  =1$, so at time $n$ the number 
of balls of the sampled color is increased by 1, that is, we add one ball of the same color. 
Alternatively,
we have the triggering event $B_n=1$ and we add $F(1)=1$ ball of a new color. In either case, exactly 
one ball is added. 

At any point in time, in Simon urn scheme, the number of appearances of a color in 
$\mathcal S$
is inductively the number of balls of that color in the urn.
\subsection{Rationale underlying the FAQ urn} 
\label{Subsec:rationale}
There is a motivation for the choices of the dynamics of the FAQ urn. The choice $p_n$, with possibly different $p$'s,
keeps the model general and adaptable to a variety of FAQ applications. For instance,
in some applications, the sequence $p_n$ may be increasing, 
as in a FAQ database with
fundamental questions that most people would ask of  a passport-issuing agency, such as 
``What is the address of the local branch?'' 
One expects that inquiries about the fundamental questions 
to be made a lot. As other less popular questions are inserted in the FAQ 
database, there will be a propensity for the first few fundamental ones. The probability
to pull out an early question from the existing database will increase.

A FAQ database supporting a service related to fashion may experience the opposite
evolution, with decreasing $p_n$. 
For example, when a fashion house like Channel comes up with a new 
perfume in 2026 or when Mercedes company releases the new 2026 model, there may be interest in certain questions at the beginning of the year 
about the product, such as inquiries about the price. 
Such interest will wane toward the end of the year.  The probability
to pull out a question from the existing database will decrease, as consumers will show more
interest in other current products.

Note that the Matthew principle underlies the FAQ triggered urn---an economics principle according to which ``success breeds success'' or ``the rich get richer.'' In the 
FAQ model, questions asked frequently are favored as they 
are represented by more balls in the urn, and thus
are more likely to come up in the drawings in case of no triggering. 

The assumption that the variables $B_n$ are independent is reasonable for a FAQ database 
serving a large population. The probabilities associated with the Bernoulli variables
may have variations depending on the service.
\subsection{An illustrative example}
Consider Simon urn as the FAQ triggered urn.
Recall from the discussion right before Subsection~\ref{Subsec:rationale} that in this model
at every time step one ball is added.
Let us take the instance of $\cal S$ in which the initial segment is
 $1121223$ representing a string of  seven
happenings at the beginning of the stochastic path. The  
first trigger $B_0$ always succeeds. 
At  time 1, we have a question, which we call ${\cal Q}_1$,    
 a ball of color 1 is deposited in the urn. After failure of the trigger $B_1$, the question~${\cal Q}_1$ is posed again at time 2, an additional ball of color 1 is added 
to the urn. Right before time 3, there is a successful trigger and a new question is asked; 
it is considered as~${\cal Q}_2$  and is  
coded with a ball of color 2 in the urn. Another trigger failure occurs at $B_3$, the 
question~${\cal Q}_1$ is asked again; another ball of color~1 is added to the urn. 
Then two trigger failures follow and~${\cal Q}_2$ is asked twice in a row, with two balls of color~2---accounting for this happening---added to the urn. Right before time 7,  the trigger $B_6$
succeeds yet again and a new question is posed. It
is considered as~${\cal Q}_3$, which is  coded with a ball of color 3 in the urn. By time 7, we have
three balls of color 1, three balls of color 2, and one ball of color 3, matching the history
and all the frequencies of the questions asked.
The probability of this sequence is 
$$1  \times (1-p) \times p \times \frac 2 3\, (1-p)
\times \frac 1 4 (1-p) \times \frac 2 5 (1-p) \times p = \frac 1 {15}\, p^2 (1-p)^4.$$ 

\section{Analysis of the FAQ triggered urn scheme}
\label{Sec:analysis}
In the sequel, we use the notation $\Ber(s)$ for a Bernoulli random variable 
with success probability $s$, and $\normal (0,1)$ stands for a standard normal random variate.
Let us set ${\cal S}=(S_n)_{n\geq 1}$ and denote by $C_n$ the number of different colors in the urn at time $n$ and registered in $\mathcal S$.  According to the dynamics described in Section~\ref{Sec:FAQ},we have $C_0=0$ (the urn is initially empty) and
\begin{equation}
C_n=\sum_{i=0}^{n-1} B_i;
\label{Eq:Cn}
\end{equation} 

Moreover, for each epoch $n\ge 1$,
we have
\begin{align*}
	\Prob\big(S_{n}\not \in {\mathcal C}_{n-1}\big) &=p_{n-1}, \qquad\hbox{(with $p_0=1$)};\\
    \Prob(S_{n}=c\,|\, S_1,\dots, S_{n-1}) &=
(1-p_{n-1})\, 
\frac{F(K_{n-1,c})}{\sum_{c'\in \mathcal{C}_{n-1}} F(K_{n-1,c'})},
\quad\forall c\in {\mathcal C}_{n-1}\,,
\end{align*}
where ${\mathcal C}_{n-1}=\{1,\dots,C_{n-1}\}$ denotes the set of different colors 
registered in~$\mathcal S$ and present in the urn at  time $n-1$.  Note that the event 
$S_n \not \in {\mathcal C}_{n-1}$ means that the color appearing at time $n$ is new in the urn.

By L\'evy's extension of Borel-Cantelli lemmas (see, for instance, Section 12.15 in~\cite{Williams}) applied to the sequence $(B_n)_{n=0}^\infty$, we get the following.
\begin{theorem}\label{th-C} (Asymptotic behavior of $C_n$)\\
We have one the following two cases:
\begin{itemize}
\item If $\sum_{n=0}^{\infty} p_n <\infty$, we have $C_\infty=\sup_n C_n<\infty$ a.s.;
\item If $\sum_{n=0}^{\infty} p_n =\infty$, we have
$C_\infty=\sup_n C_n =\infty$ and  
$C_n/(\sum_{i=0}^{n-1} p_i) \stackrel{a.s.}\longrightarrow 1$.  In particular, when  
$\sum_{n=1}^{\infty}(1 - p_n ) < \infty$, there exists a random time  $N$ such
that, for any time $n \ge  N$,  a new color is observed. 
Hence, each color will be observed a finite number of times, and only finitely many colors will end up having a number of observations larger than $1$.
\end{itemize}
\end{theorem}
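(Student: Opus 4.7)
The plan is to reduce everything to Borel--Cantelli type statements for the independent Bernoulli sequence $(B_n)_{n\ge 0}$, since by~\eqref{Eq:Cn} the count $C_n$ is simply the partial sum of that sequence. Following the hint that precedes the statement, I would use the first Borel--Cantelli lemma in the summable case, L\'evy's conditional Borel--Cantelli lemma in the divergent case, and the first Borel--Cantelli lemma once more applied to the complementary events $\{B_n=0\}$ for the last assertion.

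For the summable case $\sum_{n\ge 0} p_n <\infty$, I would apply the first Borel--Cantelli lemma to the events $\{B_n=1\}$. Their probabilities sum to $\sum_n p_n<\infty$, so only finitely many of them occur almost surely, hence $C_\infty=\sum_{i\ge 0} B_i<\infty$ a.s., and the monotonicity of $(C_n)$ then gives $C_\infty=\sup_n C_n$.

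For the divergent case $\sum_n p_n=\infty$, I would introduce the natural filtration $\mathcal F_n=\sigma(B_0,\dots,B_n)$. Independence yields $\Prob(B_n=1\mid\mathcal F_{n-1})=p_n$, so $\sum_{i=0}^{n-1} p_i$ is precisely the previsible compensator of $C_n$. Since this compensator diverges, L\'evy's extension of Borel--Cantelli simultaneously delivers $C_\infty=\infty$ a.s.\ and the strong ratio limit $C_n/\sum_{i=0}^{n-1} p_i\almostsure 1$.

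The last assertion follows by applying the first Borel--Cantelli lemma to the complementary events $\{B_n=0\}$, whose probabilities $1-p_n$ are assumed summable. Hence there is an a.s.\ finite random index $N$ such that $B_n=1$ for every $n\ge N$; after that time every epoch merely triggers a brand-new color and no ball is ever drawn, so the finitely many colors present at time $N$ cease to appear in $\mathcal S$ and each color born at a subsequent epoch is registered exactly once. Therefore every color is observed finitely often and only those finitely many colors already alive at time $N$ can ever accumulate more than one observation. No real obstacle is anticipated; the only small point to verify is the compensator identification $\Prob(B_n=1\mid\mathcal F_{n-1})=p_n$, which is immediate from independence.
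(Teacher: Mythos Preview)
Your proposal is correct and follows exactly the approach the paper takes: the paper simply appeals to L\'evy's extension of the Borel--Cantelli lemmas (Williams, Section~12.15) applied to the independent Bernoulli sequence $(B_n)$, and you have faithfully unpacked that citation into the summable case (first Borel--Cantelli), the divergent case (L\'evy's ratio statement), and the final assertion via first Borel--Cantelli on the complementary events $\{B_n=0\}$. There is nothing to add or correct.
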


From Theorem 1.1 and Remark 1.2 in \cite{collevecchio} (recall that our $F$ is strictly increasing by assumption), we obtain
\begin{theorem}
\label{Thm:Kcn}
If $p_n\le p<1$, then we have the following two cases:
\begin{itemize}
	\item If $\sum_{n=1}^\infty 1/F(n) < \infty$, then  there will be, a.s., exactly one color observed an infinite number of times,  
	all the other colors being observed a finite  number of times (possibly zero).
	\item  If $\sum_{n=1}^{\infty} 1/F(n) = \infty$, then all the observed colors will be, a.s., observed infinitely often.  
	More formally, we have $K_{n,c}\uparrow \infty$ for each color $c$ with $1\leq c\leq C_\infty$.
\end{itemize}
\end{theorem}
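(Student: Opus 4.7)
The plan is to reduce the claim to the classical Rubin exponential-embedding analysis for generalized \polya\ urns with reinforcement function $F$, and then handle the FAQ peculiarity that colors are inserted dynamically at trigger times. The bound $p_n\le p<1$ yields $1-p_n\ge 1-p>0$, so by the second Borel--Cantelli lemma the set $\{n:B_{n-1}=0\}$ of ``draw'' times is a.s.\ infinite; conditionally on $(B_n)_{n\ge 0}$, the FAQ urn restricted to draw times behaves as a generalized \polya\ urn in which new colors are inserted at the trigger times with initial weight $F(1)$.

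First I would set up Rubin's embedding: attach to each color $c$ an independent sequence $(E_{c,k})_{k\ge 1}$ of iid $\mathrm{Exp}(1)$ random variables, and, once $c$ is inserted, place its $k$-th subsequent arrival at continuous time $\sum_{j=1}^{k} E_{c,j}/F(j)$ from the moment of insertion. A standard competing-exponentials computation shows that the relative order of arrivals across colors has the same joint law as the sequence of draws in the FAQ urn, conditionally on the trigger times. By Kolmogorov's three-series theorem applied to the independent positive summands $E_{c,j}/F(j)$, the completion time $\tau_c=\sum_{j=1}^{\infty} E_{c,j}/F(j)$ is a.s.\ finite when $\sum 1/F(j)<\infty$ and a.s.\ infinite when $\sum 1/F(j)=\infty$.

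In the divergent case, every already-inserted color $c$ has $\tau_c=\infty$ a.s., so its arrivals in the embedding are spread over all of $(0,\infty)$; since the FAQ draws correspond bijectively to arrivals, each color $c$ with $1\le c\le C_\infty$ must be drawn infinitely often, giving $K_{n,c}\uparrow\infty$. In the summable case the $\tau_c$ are a.s.\ finite and pairwise distinct (by absolute continuity), and the color whose clock ``outruns'' all the others is drawn infinitely often while the remaining colors complete their arrivals after only finitely many draws occur in between, yielding exactly one infinite-count color.

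The main obstacle is the summable case, because triggers continue to insert fresh colors at arbitrarily late discrete times and one must rule out the possibility that some late-born color overtakes the eventual monopolist. I would address this by a coupling argument: compare the FAQ urn, from any time onward, to a classical generalized \polya\ urn with a frozen color set, then use a Borel--Cantelli estimate on the later insertion times together with the fact that each freshly inserted color starts with weight $F(1)$ and an iid copy of the completion-time random variable, to conclude that only finitely many future colors can catch the current leader. This is precisely the content of Theorem~1.1 and Remark~1.2 of~\cite{collevecchio}, which we invoke directly to finish the proof.
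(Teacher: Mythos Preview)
Your proposal is correct and ultimately coincides with the paper's approach: the paper gives no argument of its own and simply states that the theorem follows from Theorem~1.1 and Remark~1.2 of~\cite{collevecchio} (using that $F$ is strictly increasing), which is exactly the reference you invoke at the end. The difference is only expository: you additionally sketch the Rubin exponential embedding and the monopoly/recurrence dichotomy that underlies that cited result, and you correctly isolate the one nontrivial issue in the summable case---controlling the late-born colors when $C_\infty=\infty$---before deferring to~\cite{collevecchio} for its resolution.
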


In the particular case when $F(x)=\rho x$ with $\rho>0$, under the condition that $p_n$ converges to $p\in [0,1)$ in a suitable way, we can obtain the rate at which the number of times a color is observed grows to $\infty$.
\begin{theorem}\label{th-K} Assume $F(x)=\rho x$ with $\rho>0$ and $p_n=p+O(1/n^\beta)$ with $0\le p<1$ and $\beta>p$. Then, for every observed color $c$, that is for each color $c$ with $1\leq c\leq C_\infty$, we have 
$$
\frac {K_{n,c}}{n^{1-p}}
\stackrel{a.s.}\longrightarrow K(c)\in (0,\infty)
\qquad\hbox{(i.e. $K_{n,c}\stackrel{a.s.}\sim K(c)\,n^{(1-p)}$)}\,.
$$
\end{theorem}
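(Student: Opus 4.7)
I fix a color $c$ with $1\le c\le C_\infty$ and work on the almost sure event $\{\tau_c<\infty\}$, where $\tau_c:=\inf\{n\ge 1:K_{n,c}=1\}$ is the time of first appearance of color $c$. Theorem~\ref{Thm:Kcn} applied with $F(x)=\rho x$ yields $K_{n,c}\uparrow\infty$ a.s. Linearity of $F$ forces the total weight in the urn at time $n$ to equal $\rho n$, so for $n\ge\tau_c$,
\begin{equation*}
\Prob(K_{n+1,c}=K_{n,c}+1\mid \mathcal{F}_n)=(1-p_n)\,\frac{K_{n,c}}{n},\qquad \E[K_{n+1,c}\mid \mathcal{F}_n]=\Bigl(1+\tfrac{1-p_n}{n}\Bigr)K_{n,c}.
\end{equation*}
Defining $\alpha_n:=\prod_{k=\tau_c}^{n-1}(1+(1-p_k)/k)^{-1}$ with $\alpha_{\tau_c}=1$, the process $M_{n,c}:=\alpha_nK_{n,c}$ is a nonnegative $(\mathcal{F}_n)_{n\ge\tau_c}$-martingale with $M_{\tau_c,c}=1$.

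Taylor-expanding and using $p_k=p+O(k^{-\beta})$ with $\beta>p\ge 0$ (so $\beta>0$), I obtain $-\log\alpha_n=(1-p)\log n+R_c+o(1)$ a.s., where the finite $\mathcal{F}_{\tau_c}$-measurable random variable $R_c$ absorbs the convergent error series $\sum_k (p_k-p)/k$ and $\sum_k k^{-2}$. Hence $\alpha_n\sim e^{-R_c}n^{-(1-p)}$ a.s., so it is enough to prove $M_{n,c}\to M_\infty(c)$ a.s.\ with $M_\infty(c)>0$: one then concludes $K_{n,c}/n^{1-p}=M_{n,c}/(\alpha_n n^{1-p})\to e^{R_c}M_\infty(c)=:K(c)\in(0,\infty)$. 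A direct Bernoulli-variance calculation yields
\begin{equation*}
\V[M_{n+1,c}-M_{n,c}\mid \mathcal{F}_n]=\alpha_{n+1}^2\,(1-p_n)\frac{K_{n,c}}{n}\Bigl(1-(1-p_n)\frac{K_{n,c}}{n}\Bigr)\le \alpha_{n+1}^2\,\frac{K_{n,c}}{n};
\end{equation*}
taking conditional expectations given $\mathcal{F}_{\tau_c}$ and using $\E[K_{n,c}\mid\mathcal{F}_{\tau_c}]=1/\alpha_n$, the $n$-th increment variance is at most $\alpha_{n+1}^2/(n\alpha_n)$, of order $n^{-(2-p)}$, summable because $p<1$. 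Orthogonality of martingale increments then makes $(M_{n,c})$ $L^2$-bounded (conditionally on $\mathcal{F}_{\tau_c}$), so $M_{n,c}\to M_\infty(c)$ almost surely and in $L^2$, with $\E[M_\infty(c)\mid\mathcal{F}_{\tau_c}]=1$.

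The delicate step, and the one I expect to be the main obstacle, is upgrading $\E[M_\infty(c)\mid\mathcal{F}_{\tau_c}]=1$ to $M_\infty(c)>0$ almost surely. The plan is to apply Kakutani's theorem to the factorization $M_{n,c}=\prod_{k\ge\tau_c}(M_{k+1,c}/M_{k,c})$: a Taylor expansion around $1$ of the two possible values of $\sqrt{M_{k+1,c}/M_{k,c}}$ shows that $1-\E[\sqrt{M_{k+1,c}/M_{k,c}}\mid\mathcal{F}_k]=(1-p_k)/(8kK_{k,c})+O(k^{-2})$, so the Kakutani criterion reduces to the a.s.\ summability of $(1-p_k)/(kK_{k,c})$. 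This in turn follows from a lower bound $K_{k,c}\gtrsim k^{1-p}$, which I would bootstrap from the $L^2$-convergence of $M_{n,c}$ via Fatou on the events $\{M_\infty(c)>\varepsilon\}$ and then rule out positive probability for the complement; the strengthened hypothesis $\beta>p$ is what makes the bootstrap uniform by controlling the time-inhomogeneity $p_k-p$ relative to the typical size of $K_{k,c}$. A fallback is an Athreya-Karlin continuous-time embedding, representing $(K_{n,c})_{n\ge\tau_c}$ as the discrete skeleton of a pure-birth process with slowly varying rate and invoking the Kesten-Stigum theorem to obtain a.s.\ positivity of the Malthusian limit.
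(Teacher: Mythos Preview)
Your route differs from the paper's: there the proportion $P_n(c)=K_{n,c}/n$ is cast as a stochastic-approximation recursion $P_{n+1}(c)=(1-\tfrac1{n+1})P_n(c)+\tfrac1{n+1}Y_{n+1}(c)$ with $\E[Y_{n+1}(c)\mid\mathcal F_n]=(1-p)P_n(c)+O(n^{-\beta})$, and Theorem~S1.3 of \cite{ale-cri-ghi} is invoked as a black box to obtain directly $n^{p}P_n(c)\to\widetilde P(c)\in(0,\infty)$; both convergence and strict positivity of the limit come packaged in that external result. Your martingale argument is more self-contained on the convergence side --- the $L^2$ bound is correct and yields $M_{n,c}\to M_\infty(c)$ a.s.\ and in $L^2$ with $\E[M_\infty(c)\mid\mathcal F_{\tau_c}]=1$ --- but the positivity step, which you rightly flag as the crux, has a genuine gap.

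The Kakutani plan is circular as written. Your expansion $1-\E[\sqrt{M_{k+1,c}/M_{k,c}}\mid\mathcal F_k]\asymp 1/(kK_{k,c})$ is correct, but the a.s.\ summability of $\sum_k 1/(kK_{k,c})$ is, under any product-martingale dichotomy, \emph{equivalent} to $M_\infty(c)>0$ a.s., and the lower bound $K_{k,c}\gtrsim k^{1-p}$ you propose to feed into it is nothing other than $M_\infty(c)>0$. On the event $\{M_\infty(c)=0\}$ the Kakutani sum diverges, perfectly consistent with that event having positive probability; ``Fatou on $\{M_\infty(c)>\varepsilon\}$'' supplies no mechanism to shrink $\Prob(M_\infty(c)=0)$. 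A clean fix uses the ingredient you already recorded but did not exploit here, namely $K_{n,c}\uparrow\infty$ from Theorem~\ref{Thm:Kcn}: set $\sigma_N=\inf\{n:K_{n,c}=N\}$ and rerun your variance computation from time $\sigma_N$ to get $\V\bigl[M_\infty(c)/M_{\sigma_N,c}\bigm|\mathcal F_{\sigma_N}\bigr]=O(1/N)$, whence by Chebyshev $\Prob(M_\infty(c)=0\mid\mathcal F_{\sigma_N})=O(1/N)$; L\'evy's upward theorem along the stopping times $\sigma_N\uparrow\infty$ then forces $\indicator_{\{M_\infty(c)=0\}}=0$ a.s. This restart-plus-L\'evy argument, not Kakutani, is what actually closes the gap.
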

\begin{proof} Set $T_n=\sum_{c'\in \mathcal{C}_{n}} F(K_{n,c'})$, which is the total number of balls in the urn at time $n$. Moreover, for each observed color $c$, denote by $N(c)\geq c$ the first time at which we observe $c$ (note that $N(c)<\infty$ by definition for each observed color $c$, i.e.~when $1\leq c\leq  C_\infty$) and set $P_n(c)=F(K_{n,c})/T_n$, the proportion of balls of color $c$ in the urn at time $n$. Then 
$P_{N(c)}(c)=F(1)/T_{N(c)}>0$. For $n\ge N(c)$, we get  
\begin{align*}
P_{n+1}(c)-P_n(c)
            &= \frac{F(K_{n+1,c})} {T_{n+1}} -P_n(c) \\
            &= \frac{F(K_{n+1,c})-F(K_{n,c})} {T_{n+1}} + \frac {F(K_{n,c})}{T_n}
                       \times \frac {T_n}{T_{n+1}} -	P_n(c) \\
            &= \frac{F(K_{n+1,c})-F(K_{n,c})} {T_{n+1}} - 
                       \frac {T_{n+1} -T_n}{T_{n+1}} P_n(c).         
	\end{align*}	
In this model, we have $T_n = \rho n$, and we obtain
$$
P_{n+1}(c)
            = \frac{\rho K_{n+1,c}-\rho K_{n,c}} {\rho(n+1)} + P_n(c) - P_n(c)
                       \frac {\rho}{\rho(n+1)}.$$
These dynamics amount to 
    \begin{align*}
		P_{n+1}(c)&=
		\Big(1-\frac{1}{n+1}\Big) P_n(c)+ \frac{1}{n+1} Y_{n+1}(c)
		,\\
Y_{n+1}(c)&= K_{n+1,c}- K_{n,c}.
\end{align*}	

Note that $\E[Y_{n+1}(c)\, |\, S_1,\dots, S_n]=(1-p_n)P_n(c)=(1-p)P_n(c)+\rho_n(c)$ 
with $0<1-p\le 1$ and 
$\rho_n(c)=(p-p_n)P_n(c)$, and so $\rho_n(c)/P_n(c)\stackrel{a.s.}\to 0$ and 
$\rho_n(c)=O(1/n^\beta)$ with  $\beta>p$. 
Applying Theorem~S1.3 in \cite{ale-cri-ghi} (with $N(c)$ as the initial time), we get 
$$n^{p} \, P_n(c)\stackrel{a.s.}\longrightarrow 
{\widetilde P}(c)\in (0,\infty).
$$
This obviously means 
$$
n^{p} \Prob(S_{n+1}=c\,|\, S_1,\dots, S_{n})=n^p (1-p_{n})P_{n}(c)
\stackrel{a.s.}\longrightarrow (1-p) {\widetilde P}(c).
$$ 
To conclude, it is enough to observe that $K_{n,c}=\sum_{i=1}^n (K_{i,c} - K_{i-1,c})$ with $(K_{i,c} - K_{i-1,c})\in \{0,1\}$ and 
$$\E[K_{i,c} - K_{i-1,c}\, |\, S_1,\dots, S_{i-1}]=\Prob(S_{i}=c\,|\, S_1,\dots, S_{i-1})
\stackrel{a.s.}\sim \frac{(1-p)}{n^p}\widetilde{P}(c).$$ 
Therefore, by L\'evy’s extension of Borel-Cantelli lemmas (see, for instance, Section 12.15 in \cite{Williams}), we obtain  
	$$
	K_{n,c}\stackrel{a.s.}\sim  \, K(c)\, n^{(1-p)},
	$$
    with $K(c)=\widetilde{P}(c)$.
\end{proof}
\begin{remark}\label{rem:Behavior_Pn_Kn_rho_tilde_not_zero} \rm 
If in Theorem~\ref{th-K} we replace the assumption $F(x)=\rho x$ by $F(x)=\rho x+\widetilde{\rho}$ with $\rho>0$ and $\rho+\widetilde{\rho}> 0$ (so that $F(1)>0$) and the condition $\beta>p$ by $\beta> p(1+\widetilde{\rho}/\rho)/(1+p\widetilde{\rho}/\rho)$, then we have $T_{n}=\rho n + \widetilde{\rho}C_n$. 
By Theorem~\ref{th-C}, we have $C_n\stackrel{a.s.}\sim p n $ when $p>0$ and 
$C_n=o(n)$, when $p=0$. Hence, we get $T_n\stackrel{a.s.}\sim (\rho+ p\widetilde{\rho}) n$. Moreover, we have 
\begin{equation*}
\begin{split}
P_{n+1}(c)-P_n(c)&=
\frac{\rho(K_{n+1,c}-K_{n,c})}{T_{n+1}} -	
P_n(c)\, \frac{\rho +\widetilde{\rho}\,\indicator_{\{S_{n+1} \not \in {\mathcal C}_n\}}}{T_{n+1}}\\
&\approx 
\frac{K_{n+1,c}-K_{n,c}}{(1 + p \widetilde{\rho}/\rho)(n+1)} -	
P_n(c)\, 
\frac{1+(\widetilde{\rho}/\rho)\,\indicator_{\{S_{n+1}\not \in {\mathcal C}_n\}}}{(1+p\widetilde{\rho}/\rho)(n+1)}.
\end{split}
	\end{equation*}
    Therefore, we can write 
     \begin{equation*}
	\begin{split}
		P_{n+1}(c)&\approx
		\Big(1-\frac{1}{n+1}\Big) P_n(c)+ \frac{1}{n+1} Y_{n+1}(c)
		,\\
Y_{n+1}(c)&= \frac{K_{n+1,c}- K_{n,c} }{1+p\widetilde{\rho}/\rho}
- P_n(c)\, 
\frac{\widetilde{\rho}/\rho}{1+p\widetilde{\rho}/\rho}\indicator_{\{S_{n+1}\not \in {\mathcal C}_n\}}\\
&\qquad {} +P_n(c)\, \frac{p\widetilde{\rho}/\rho}{(1+p\widetilde{\rho}/\rho)}.
	\end{split}
\end{equation*}	
After some computation, we find   
\begin{equation*}
\begin{split}
\E[Y_{n+1}(c)\, | \, S_1,\dots, S_n]
&=\frac{1-p_n}{1+p\widetilde{\rho}/\rho}P_n(c)
-\frac{(p_n-p)\widetilde{\rho}/\rho}{1+p\widetilde{\rho}/\rho}P_n(c)\\
&= \delta P_n(c) + 
O\Big(\frac 1 {n^\beta}\Big),\\
\mbox{with }\delta = \frac{1-p}{1+p\widetilde{\rho}/\rho}=1-p\, \frac{1+\widetilde{\rho}/\rho}{1+p\widetilde{\rho}/\rho}.
\end{split}
\end{equation*}
Since  $0<\delta\leq 1$ (because $0\leq p<1$ and $\widetilde{\rho}/\rho>-1$) and $\beta>1-\delta$, applying Theorem~S1.3 in \cite{ale-cri-ghi} (with $N(c)$ as the initial time), we get 
$n^{1-\delta} P_n(c)\stackrel{a.s.}\longrightarrow 
{\widetilde P}(c)\in (0,\infty)$
and, consequently, 
$$
n^{1-\delta}\, \Prob(S_{n+1}=c\,|\, S_1,\dots, S_{n})=n^{1-\delta} (1-p_{n})P_{n}(c)
\stackrel{a.s.}\longrightarrow (1-p) {\widetilde P}(c),
$$
and $K_{n,c}\stackrel{a.s.}\sim K(c) n^\delta$ with $K(c)=((1-p)/\delta ){\widetilde P}(c)$.
\end{remark}

\subsection{Asymptotic approximation of the distribution of~$C_n$}
We can  get a meaningful result for the asymptotic distribution of 
the number~$C_n$ of observed colors for a general sequence~$\{p_n\}_{n=1}^\infty$,
provided the sequence follows some regularity conditions. \\
 
Recall the representation~(\ref{Eq:Cn})
of $C_n$ as a convolution of independent  Bernoulli random variables.
The variables $B_i$ 
are uniformly bounded.  
Namely, we have
$$|B_i - p_i| \le  B_i +p_i \le 2,
           \qquad \mbox{for}\ 1\le i \le n.$$
The setup coincides with Example 7.15 of~\cite{Karr}, p.~193, where
Lindeberg's condition for normality is verified, under the regularity condition
$$\sum_{n=0}^\infty p_n(1-p_n) = \infty. $$ 
Under such a regularity condition, we obtain the central limit theorem
\begin{equation}
\frac {C_n - \sum_{i=0}^{n-1} p_i} {\sqrt{\sum_{i=0}^{n-1}p_i(1-p_i) }} \ \convD \ \normal (0,1). 
\label{Eq:CLT}
\end{equation}
The regularity condition is not too restrictive and is met in a wide variety of FAQ databases, such as for example $p_n$ is fixed, or $p_n = 1/n$.\\

We can find better Poisson approximations with faster convergence rates. 
Toward this goal, we state a fundamental tool,
based on the total variation distance ($d_{TV}$) between  nonnegative probability
measures. For two such measures $\Prob$ and $\mathbb Q$ (both over $\mathbb N \cup \{0\}$),  the total variation distance
is defined as
\begin{equation*}
d_{TV}(\Prob,\mathbb Q)=  \sup_{A \subseteq \mathbb N \cup \{0\}} 
\big|\Prob(A) - {\mathbb Q}(A)\big|= \frac 1 2 
   \sum_{j=0}^\infty \big|\Prob(X=j) - {\mathbb Q}(Y=j) \big|.
   \label{Eq:sup}
\end{equation*}   
Some authors use the notation $d_{TV} (X,Y)$ to mean the total variation distance between the law of the random variable~$X$ and that of $Y$. 
We use this shorthand for a briefer presentation.

The following result by Barbour and Hall~\cite{Barbour,Holst} is by now a solid classic. It is a tool toward
Poisson approximation and ultimately 
asymptotic normality.
\begin{theorem} 
\label{Thm:Holst}
Let $Y_0, \ldots, Y_{n-1}$ be independent
Bernoulli random variables, with $Y_i\ \eqlaw \ \Ber(p_i)$,
for $i=0, \ldots, n-1$. Define
$$\lambda_{n,1} = \sum_{i=0}^{n-1} \E[Y_i] = \sum_{i=0}^{n-1} p_i,
\qquad
and\qquad  \lambda_{n,2} =\sum_{i=0}^{n-1} p_i^2.
$$
Then, we have
$$d_{TV}\Big( \sum_{i=0}^{n-1}  Y_i,  \Poi (\lambda_{n,1}\big)\Big)
       \le (1 - e^{-\lambda_{n,1}}) \, \frac            
    {\lambda_{n,2}} {\lambda_{n,1}}.
    $$
\end{theorem}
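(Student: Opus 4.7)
The plan is to prove this via the Chen--Stein method, which is by now the standard route to quantitative Poisson approximation for sums of independent Bernoulli variables. The starting point is Stein's characterization: a nonnegative integer-valued random variable $Z$ is $\Poi(\lambda)$ if and only if $\E[\lambda g(Z+1) - Z g(Z)] = 0$ for every bounded $g\colon\mathbb{N}\cup\{0\}\to\mathbb{R}$. Fixing a set $A\subseteq \mathbb{N}\cup\{0\}$, I would introduce the Stein solution $g_A$ defined as the unique bounded solution of the recursion
\begin{equation*}
\lambda\, g_A(k+1) - k\, g_A(k) = \indicator_A(k) - \Poi(\lambda)(A), \qquad k\ge 0,
\end{equation*}
with $\lambda=\lambda_{n,1}$. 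Substituting any random variable $W$ for $k$ and taking expectations gives $\Prob(W\in A) - \Poi(\lambda)(A) = \E[\lambda g_A(W+1) - W g_A(W)]$, so bounding this latter expectation uniformly in $A$ yields a bound on $d_{TV}$.

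The next step is the classical sharp estimate on the first difference of $g_A$, namely
\begin{equation*}
\|\Delta g_A\|_\infty := \sup_{k\ge 0}|g_A(k+1) - g_A(k)| \ \le\ \frac{1 - e^{-\lambda}}{\lambda}.
\end{equation*}
This is a calculus exercise on the explicit form of $g_A$ (writing $g_A(k+1)$ as a ratio of tail probabilities of the Poisson law and optimizing over $A$); I would quote it and treat it as the one technical input. It is exactly this estimate that produces the factor $(1-e^{-\lambda_{n,1}})/\lambda_{n,1}$ appearing in the theorem.

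With the Stein machinery in place, the main computation is to exploit independence of $Y_0,\dots,Y_{n-1}$. Writing $W=\sum_{i=0}^{n-1} Y_i$ and $W_i = W - Y_i$, note that $W_i$ is independent of $Y_i$ and $Y_i\in\{0,1\}$, so $Y_i\, g_A(W+1) = Y_i\, g_A(W_i+2)$. Then
\begin{align*}
\E[\lambda\, g_A(W+1) - W g_A(W)]
&= \sum_{i=0}^{n-1} p_i\,\E[g_A(W+1)] - \sum_{i=0}^{n-1} \E[Y_i\, g_A(W_i+1)]\\
&= \sum_{i=0}^{n-1} p_i \,\E\bigl[g_A(W+1) - g_A(W_i+1)\bigr]\\
&= \sum_{i=0}^{n-1} p_i^2\,\E\bigl[g_A(W_i+2) - g_A(W_i+1)\bigr],
\end{align*}
using independence in both the second and third lines. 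Taking absolute values and applying the difference bound gives
\begin{equation*}
\bigl|\Prob(W\in A)-\Poi(\lambda_{n,1})(A)\bigr|\ \le\ \frac{1-e^{-\lambda_{n,1}}}{\lambda_{n,1}}\sum_{i=0}^{n-1}p_i^2,
\end{equation*}
and taking the supremum over $A$ produces the announced bound. The genuine obstacle is the first-difference estimate for $g_A$: once that lemma is established, the rest is a short algebraic identity powered by independence. I would therefore present that estimate as a lemma, possibly cite \cite{Barbour}, and devote the bulk of the write-up to the clean telescoping identity above.
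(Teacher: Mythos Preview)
The paper does not prove Theorem~\ref{Thm:Holst} at all: it is stated as a classical tool and attributed to Barbour and Hall~\cite{Barbour,Holst}, with no argument given in the text. Your Chen--Stein derivation is correct and is precisely the approach of the cited references, so you are effectively reproducing the original proof rather than offering an alternative to something in the paper. The telescoping identity you wrote down is clean and the only substantive input, as you note, is the sharp first-difference bound $\|\Delta g_A\|_\infty \le (1-e^{-\lambda})/\lambda$ for the Stein solution; citing \cite{Barbour} for that lemma is appropriate, since its proof is a separate self-contained calculation on the explicit Poisson tail representation of $g_A$ and is not short.
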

Under a regularity condition on the sequence $\{p_n \}_{n=1}^\infty$, forcing
$\lambda_{n,2}/\lambda_{n,1}$ to converge to 0, the Poisson law is a superior
approximation to the normal distribution.

For example, in the FAQ context, with $p_0=1$ and $p_i = 1/i$ for $i\geq 1$ 
(i.e., $Y_n=B_n = \Ber (1/i)$), the scheme is under 
triggers of diminishing strength and early questions are  
more likely
in the future than the appearance of new questions. So, we have
$$\lambda_{n,1} = 1+ \sum_{i=1}^{n-1} \frac 1 i =
1 + H_{n-1} \sim \ln n,
\quad
\mbox{and}\quad  \lambda_{n,2} =1+ \sum_{i=1}^{n-1} \frac 1 {i^2} \sim 1+\frac {\pi^2} 6,$$
and $H_{n-1}$ is the harmonic number of order $n-1$.  

By the Barbour-Holst bound, we find that, uniformly in $x\in \mathbb R$, the probability
$\Prob(C_{10^7} \le x)$ does not differ from the probability 
$\Prob(\Poi(1+H_{10^7-1}) \le x) \approx \Prob(\Poi(17.695311\le x))$  
by more than  
$(1+\pi^2/6)/ (1+H_{10^7-1}) \approx 0.1494$.

As the Barbour-Holst bound is uniform, we can assert that 
\begin{align*}
&\Big|\Prob\big( C_n \le 1+H_{n-1}+ x \sqrt {1+H_{n-1}}\,\big) \\
          &\qquad \qquad {} -
                    \mathbb P \big(\Poi (1+H_{n-1}) \le 1+H_{n-1} + x \sqrt {1+H_{n-1}}\, \big)\Big|\\
    &\qquad \le  
    d_{TV}\big(C_n,  \Poi (1+H_{n-1})\big)\\
   &\qquad \le \frac {1+\pi^2/6} {1+H_{n-1}}\\
   &\qquad\to 0.
       \label{Eq:sup}
\end{align*} 
In other words, we have
\begin{align*}
\Big|\Prob\Big( \frac {C_n - H_{n-1}-1} {\sqrt {1+H_{n-1}}} \le x\,\Big) -
                      \mathbb P \Big(\frac {\Poi (H_{n-1}+1) -  H_{n-1}-1}  { \sqrt {1+H_{n-1}}} \le x\Big)\Big |
                      \to 0.
\end{align*} 
A family of Poisson distributions with increasing parameters converges to the standard normal distribution $\normal(0,1)$. That is to say
$$ \frac {C_n - \ln n} {\sqrt {\ln n}}\ \convD \ \normal(0,1),$$
where asymptotic adjustments to replace the armonic number with a logarithm are made
via Slutsky's theorem.
The rate of convergence via the Poisson approximation is $O\big(1/\ln n \big)$, better than the usual Berry-Esseen $O\big(1/\sqrt{\ln n}\,\big)$ rate
(see~\cite{Berry, Esseen}).
\subsection{The stationary color-frequency distribution}
Suppose the update function
$F$ is differentiable everywhere, except at a finite number of points. 
Recall that $T_n=\sum_{c\in {\cal C}_{n}} F(K_{n,c})$, which is the total number of balls 
in the urn at time~$n$. 

Set $Q_{n,k}=card\{c\in \mathcal{C}_n: K_{n,c}=k\}$, which  
represents the number of  colors in the urn that appear exactly $k$ times in the sequence $\mathcal S$ by time $n$, that is the number of colors $c$ such that $K_{n,c}=k$. 
We have $\sum_{k\ge 1} Q_{n,k}=C_n$. Assuming that the limits 
$q(k)=\lim_n Q_{n,k}/C_n$ exist and $\sum_k q(k)=1$, we call $k\mapsto q(k)$ the (stationary) color-frequency distribution.  In the following, 
we characterize it. 

We embed the discrete-time urn process in real time to enable
differential equation methods. The starting point is to replace discrete $n$ 
by continuous~$t$.  We write an equation for $Q_{t,k}$, with $k\geq 2$:
\begin{equation*}
  \begin{split}
  \frac{\partial Q_{t,k}}{\partial t}&=
  -\frac{Q_{t,k}(1-p_t) F(k)}{T_t}
  +\frac{Q_{t,k-1}(1-p_t) F(k-1)}{T_t}\\
  &=
  -(1-p_t) \frac{(Q_{t,k}-Q_{t,k-1})F(k)+Q_{t,k-1}\big(F(k)-F(k-1)\big)}{T_t}
  \\
  &\approx - \frac{(1-p_t)}{T_t}\Big(\frac{\partial \big(F(k)Q_{t,k}\big)}{\partial k}\Big).
  \end{split}
\end{equation*}
Using the asymptotic relations
$Q_{t,k}\approx q(k) C_t$ and $\frac{\partial Q_{t,k}}{\partial
  t}\approx q(k) \frac{dC_t}{dt} \approx q(k) p_t$, from the above relation, we get
  \begin{equation}\label{eq-freq}
q(k) p_t\approx -\frac{(1-p_t) C_t}{T_t}\Big(\frac{d\big(F(k)q(k)\big)}{dk}\Big).
  \end{equation}
We need for $\frac{p_t}{(1-p_t)C_t}T_t$ to converge almost surely to a strictly positive finite quantity. Assume
\begin{equation}\label{ass-limit-ell}
\frac{p_t}{(1-p_t)C_t}\, T_t\stackrel{a.s.}\longrightarrow \ell \in 
(0,\infty),
\end{equation}
so that \eqref{eq-freq} becomes 
$$
q(k)\ell \approx -\Big(\frac{d\big(F(k)q(k)\big)}{dk}\Big).
$$
Since  $\frac{d(F(k)q(k))}{dk}=\frac{dF(k)}{dk}q(k)+F(k)\frac{dq(k)}{dk}$, we obtain
$$
q(k) \ell\approx -\Big(\frac{dF(k)}{dk}q(k)
         +F(k)\frac{dq(k)}{dk}\Big),
$$
leading to  
\begin{equation}\label{eq-freq-new}
 \frac{1}{q(k)}\Big(\frac{dq(k)}{dk} \Big)\approx 
 -\Big( \frac{\ell}{F(k)}+\frac{1}{F(k)}\Big(\frac{dF(k)}{dk}\Big)\Big).
\end{equation}
If there exists a function $H$ such that $\frac{dH}{dk}=\frac{1}{F}$, we obtain 
\begin{equation}\label{eq-ln-q}
\ln\big(q(k)\big)\approx -\ell H(k) -\ln\big(F(k)\big) + M,
\end{equation}
where $M$ denotes a constant. 
Note that, in order to have  $\sum_k q(k)=1$, we need 
$\sum_k \exp(-\ell H(k))/F(k)<\infty$.  
Some examples illustrate these concepts.
\begin{example}\label{ex-1} \rm 
    Take $F(x)=\rho x + \widetilde{\rho}$ with $\rho >0$ and 
    $\rho+\widetilde{\rho}>0$. 
Then, we have 
$T_n=\rho \sum_{c\in{\mathcal C_n}}K_{n,c}+\widetilde{\rho}\, C_n = \rho n+\widetilde{\rho}C_n$. Furthermore, if we take $p_n\to p\in (0,1)$, 
by Theorem~\ref{th-C}, we also have $C_n\stackrel{a.s.}\sim p n$, and we get 
\begin{equation*}
    \begin{split}
        \frac{p_t}{(1-p_t)C_t}\, T_t 
        &\stackrel{a.s.}\sim 
\frac{p}{(1-p)p t}\,(\rho t + \widetilde{\rho} p t )\\
&\stackrel{a.s.}\longrightarrow 
\ell= \frac{\rho + p\widetilde{\rho} }{1-p}=
\frac{\rho}{1-p} \Big(1+ p \frac{\widetilde{\rho}}{\rho}\Big)> \rho>0,
    \end{split}
\end{equation*}
where the first inequality holds since $\widetilde{\rho}/\rho>-1$ and $0<p<1$. 
Equation \eqref{eq-freq-new} becomes 
$$
 \frac{1}{q(k)}\Big(\frac{dq(k)}{dk}\Big) \approx  
 -\Big(\frac \ell \rho+1\Big)
 \frac{1}{(k +\widetilde{\rho}/\rho)}. 
$$
So, we obtain  $q(k)\propto (k +\widetilde{\rho}/\rho)^{-(\ell/\rho +1)}$, that is, a (generalized) power-law with exponent strictly greater than $1$. 
In particular, in Simon model, we have $\rho=1$ and $\widetilde{\rho}=0$, so that we find 
$\ell=1/(1-p)$ and $q(k)\propto k^{-[1/(1-p)+1]}$ (as reported in~\cite{chung}). 
Note also that $\rho/\ell=\delta< 1$, where $\delta$ is the exponent found in Remark~\ref{rem:Behavior_Pn_Kn_rho_tilde_not_zero} for the power-growth of $K_{n,c}$. 
(Here we have $\delta<1$ because $p>0$, while Remark~\ref{rem:Behavior_Pn_Kn_rho_tilde_not_zero} also includes the case $p=0$, that leads to $\delta=1$.)
\end{example}

 \begin{example} \label{ex-2}\rm Set $F(x)=\rho x + \widetilde{\rho}$ with 
 $\rho >0$ and $\rho+\widetilde{\rho}>0$ and 
take
 $p_n\propto 1/ n^{1-\theta}$ with~$\theta\in (0,1)$.  
 By Theorem~\ref{th-C},
 we get 
 $p_n/C_n\stackrel{a.s.}\sim \theta/n$, $T_n\stackrel{a.s.}\sim \rho n$  and so 
 $$
 \frac{p_t}{(1-p_t)C_t}\, T_t \stackrel{a.s.}\sim
\theta t^{-1} \rho t   
 \stackrel{a.s.}\longrightarrow \ell=\theta \rho\,.
 $$
 
 Hence, equation \eqref{eq-freq-new} becomes 
 $$
 \frac{1}{q(k)}\Big(\frac{dq(k)}{dk}\Big) \approx 
 -(\theta + 1) \frac{1}{ k + \widetilde{\rho}/\rho},
$$
and we obtain $q(k)\propto (k +\widetilde{\rho}/\rho)^{-(1 + \theta)}$, again a  (generalized) power-law with exponent strictly greater than $1$, as known for  Simon model (that is when $\rho=1,\widetilde{\rho}=0$) with $p_n\propto 1/ n^{1-\theta}$, where $\theta\in (0,1)$, see \cite{Zanette}. 
\end{example}

\begin{example} \label{ex-3}\rm Set $F(x)=\rho x + \widetilde{\rho}$ with $\rho >0$ and  $\rho+\widetilde{\rho}>0$ and 
take $p_n\propto 1 / n$. By Theorem~\ref{th-C}, we get
 $ p_n/C_n\stackrel{a.s.}\sim (n \ln n )^{-1}$, $T_n\stackrel{a.s.}\sim \rho n$ and so 
 $$
 \frac{p_t}{(1-p_t)C_t}\, T_t \stackrel{a.s.}\sim 
 \frac{1}{t \ln t} \rho t \stackrel{a.s.} \longrightarrow \ell=0\,. 
$$
Since $\ell=0$, in this case assumption~\eqref{ass-limit-ell} is not valid.
\end{example}
\begin{example} \label{ex-4}\rm 
Take $F(x)=x^{1/\rho}$, with $\rho>1$. 
Recall the inequalities 
$$
\|v\|_\rho=\Big(\sum_{i=1}^N |v_i|^{\rho} \Big)^{1/\rho}\leq 
\|v\|_1=\sum_{i=1}^N |v_i|\leq 
N^{1-\frac{1}{\rho}} \|v\|_\rho =N^{1-\frac{1}{\rho}} \Big(\sum_{i=1}^N |v_i|^\rho \Big)^{1/\rho},
$$
for any $v\in {\mathbb R}^N$. 
Applying the second inequality to the vector $v_{n}$ with dimension $C_n$ and with (positive) components
 $(K_{n,1})^{1/\rho}, \ldots, (K_{n,C_n})^{1/\rho}$ and recalling that 
$\sum_{ c\in{\cal C}_n } K_{n,c}=n$, we have 
$$
T_n=\sum_{c\in {\mathcal C}_n}F(K_{n,c})=\sum_{c\in {\mathcal C}_n} (K_{n,c})^{1/\rho} \le 
C_n^{1-\frac{1}{\rho}}\Big(\sum_{c\in {\mathcal C}_n} K_{n,c} \Big)^{1/\rho}
={C_n^{1-\frac{1}{\rho}}} n^{1/\rho}.
$$
Therefore, we get 
$$
\frac{p_t}{(1-p_t)}\Big( \frac{T_t}{C_t}\Big) \le 
\frac{p_t}{(1-p_t)} \Big(\frac{C_t^{1-\frac{1}{\rho}}t^{1/\rho}}{C_t}\Big)=
\frac{p_t}{(1-p_t)} \Big(\frac{t}{C_t} \Big)^{1/\rho}\,.
$$
If $p_n$ is such that $(p_n/(1-p_n))(n/C_n)^{1/\rho}\to 0$,  we obtain $\ell=0$ and so assumption \eqref{ass-limit-ell} is not verified.  
For instance, if $p_n\propto 1/n^{1-\theta}$, with $\theta\in (0,1)$, 
then $(1-p_n)\stackrel{a.s.}\to 1$. 
By Theorem~\ref{th-C}, we have $C_n\propto n^\theta$, a.s.   
and so 
$$
\frac{p_t}{(1-p_t)} \Big(\frac{t}{C_t}\Big)^{1/\rho}
\stackrel{a.s.}{\propto} 
\frac{t^{1/\rho}}{ t^{1-\theta}\,t^{\theta/\rho}}
= 1/t^{(1-\theta)(1-\frac{1}{\rho})}\to 0.
$$ 
If $p_n\propto 1/n$,  then, by~Theorem~\ref{th-C},  we have $C_n\propto \ln(n)$ a.s., and so 
$$
\frac{p_t}{(1-p_t)}\Big(\frac{t}{C_t}\Big)^{1/\rho}
\stackrel{a.s.}\propto 
\frac{t^{1/\rho}}{t} \Big(\frac{1}{(\ln t)^{1/\rho} }\Big)
=\Big(\frac{1}{t}\Big)^{1-1/\rho}\Big(\frac{1}{\ln t }\Big)^{1/\rho}\to 0.
$$ 
\end{example}

\subsection{The frequency-rank curve}
Let us consider a generic sequence of elements (colors) and count the number of
occurrences of each element. Rank all
the elements according to their frequency of occurrence (rank $r=1$
corresponds to the most frequent element, the rank $r=2$ corresponds to
the second most frequent element and so on. 
The higher the rank, the
less frequent the element) 

Let us plot the number 
of occurrences versus the rank:  
This curve is  known as 
the {\em frequency-rank curve.} Our aim is to relate this curve to the stationary color-frequency distribution and to the update function~$F$ in the model.

Define $R_n(z)$ as the number of different colors with frequency $k\ge z$ at time $n$. 
That is,$R_n(z)$  is the number of elements in the 
sequence $\cal S$, such that $k \ge z$ at time $n$, i.e.~$R_n(z)=\sum_{k\geq z} Q_{n,k}$. 
Passing to~real time then to the asymptotics, since $Q_{t,k}\approx q(k)C_t$, we can write 
\begin{equation}\label{eq-R}
R(z)\propto \int_{z}^{\infty} q(k)\, dk,\qquad \mbox{i.e. } \frac{dR(z)}{dz}\propto - q(z).
\end{equation}
For instance, if $q(k)\propto (k+\eta)^{-\beta}$ with 
$\beta>1$ and $\eta>-1$, as in Examples~\ref{ex-1} and~\ref{ex-2} where $\beta=(\ell/\rho+1)$ and $\eta=\widetilde{\rho}/\rho$, we find
$$
R(z)\propto (z+\eta)^{-(\beta-1)}.
$$
Inverting this relation in order to obtain the behavior of the frequency-rank function $r\mapsto z(r)$, we obtain the  
(generalized) Zipf's law: 
\begin{equation}\label{eq-zipf}
\big(z(r)+\eta\big) \propto r^{-\alpha}, \qquad\mbox{with } \alpha= \frac 1 {\beta-1} \in (0,\infty).
\end{equation}
In particular, in Example~\ref{ex-1} we have $\alpha=\rho/\ell<1$ and it depends on $p$ and $\eta=\widetilde{\rho}/\rho$; while in  
Example~\ref{ex-2}, since $\ell/\rho=\theta$, we find the relationship $\theta\alpha=1$ between 
Heaps's exponent~$\theta$ and (generalized) Zipf's exponent~$\alpha$, and so $\alpha>1$ and it does not depend on $\eta=\widetilde{\rho}/\rho$.
\\
\indent Moreover, from \eqref{eq-ln-q} we get 
\begin{equation}\label{eq:master2}
	\ln\big(q(k)\big) \approx - \ln\big(F(k)\big)  - \ell \int_{k_0}^{k} \frac{1}{F(x)}\, dx +M,
\end{equation}
where $M$ denotes a suitable constant. Suppose we observe dependency in the frequency-rank plot of the form
\begin{equation}\label{eq:Zipf-emp}
	g\big(z(r)\big) = -\widehat{a} \ln(r) + \widehat{b}, 
\end{equation}
with a strictly increasing function $g$ and estimated values $\widehat{a}>0$, $\widehat{b}$. Then, we
get
\begin{equation*}
	r =  \exp \Big( -\frac{g\big(z(r)\big)-\widehat{b}}{\widehat{a}} \Big)\,.
\end{equation*}
If $g$ is differentiable, we have 
\begin{equation*}
	\frac{d z(r)}{d r} =
	\frac{d g^{-1} \big(-\widehat{a} \ln(r) + \widehat{b}\big) }{d r} =
	-\frac{\widehat{a}}{g'(g^{-1}\big(\widehat{b}-\widehat{a}\ln r)\big)r}.
\end{equation*}
Since $z(r)$ is the inverse function of $R(z)$, from \eqref{eq-R}, 
we find
$$
\frac{d z(r)}{dr} \propto -\frac{1}{q\big(z(r)\big)}
$$
and we get
\begin{equation*}
\begin{split}
	q\big(z(r)\big) \propto -\frac{1}{\frac{d z(r)}{dr}}&=
	g'(g^{-1}\big(-\widehat{a}\ln(r)+\widehat{b})\big)\frac{r}{\widehat{a}}\\
    &=
	\frac{1}{\widehat{a}}g'(z(r))\exp \Big( -\frac{g\big(z(r)\big)-\widehat{b}}{\widehat{a}} \Big).
    \end{split}
\end{equation*}
Setting $k = z(r)$ in the above relation and taking the logarithm, we
find
\begin{equation*}
	\begin{split}
		\ln\big(q(k)\big) & \approx \ln\big(g'(k)\big) - \frac{g(k)}{\widehat{a}} + M_1
		\\
		& = - \ln\Big(\frac{\widehat{a}\ell}{g'(k)}\Big) - \ell \int_{k_0}^{k}
		\frac{1}{\widehat{a}\ell} g'(s) \,ds + M_2\,,
	\end{split}
\end{equation*}
where $M_1$ and $M_1$ denote suitable constants. 
If we compare this last equation with \eqref{eq:master2},   
we arrive at the relation 
\begin{equation*}
	F(x)\approx \frac{\widehat{a}\ell}{g'(x)}\propto \frac{1}{g'(x)}.
\end{equation*}
Indeed, in Examples~\ref{ex-1} and~\ref{ex-2} we have the update function $F$ of the form $F(x)=\rho x +\widetilde{\rho}\propto (x+\widetilde{\rho}/\rho)$ and a (generalized)  Zipf's law with exponent $\alpha=\rho/\ell$ and $\eta=\widetilde{\rho}/\rho$ (see~\eqref{eq-zipf}),  which corresponds  to $g(x)=\ln(x+\eta)$ and $\widehat{a}$ equal to an estimate of $\alpha$ (see~\eqref{eq:Zipf-emp}). In particular, in the framework of Example~\ref{ex-1}, we can estimate $\eta=\widetilde{\rho}/\rho$ by 
%
%
\begin{equation}\label{eq:ratio_rhotilderho_estimator}
\widehat{(\widetilde{\rho}/\rho)}=\frac{1}{\widehat{p}}\big(\frac{1-\widehat{p}}{\widehat{\delta}}-1\big)> -1,
\end{equation}
where $\widehat{\delta}<1$ is an estimate of the exponent $\delta$ of the 
power-growth of $K_{n,c}$ (see Remark~\ref{rem:Behavior_Pn_Kn_rho_tilde_not_zero}).

\indent The appendix is devoted to some simulations that corroborate the obtained theoretical results.
\section{Simon FAQ urn}
\label{Sec:Simon}
We analyzed the FAQ triggered urn asymptotically, as $n\to\infty$. 
Certainly, the practitioner is also interested in knowing the behavior of her database for small and moderate 
$n$, too. Exact expressions facilitate this assessment.
The classic Simon model~\cite{Simon} 
in which the probability of the $n$the  trigger $p_n =p\in (0,1)$ is fixed, and $F(x) =x$ is amenable to combinatorial analysis.
We can address in the exact sense a few FAQ issues of interest. \\

Recall that $F(K_{n,c})$ is the number of balls of color $c$ in the urn at time $n$. In the Simon FAQ model, one ball is added at a time, and so 
$K_{n,c} =F(K_{n,c})$ is the number of balls of color $c$ at time $n$.

\subsection{The number of colors}
\label{Sec:colors}
Recall the convolution~(\ref{Eq:Cn}).
It is straightforward 
to find 
$$\E[C_n] = 1 + p(n-1), \qquad \V[C_n] = p(1-p)(n-1),$$
with the exact distribution
\begin{equation}
\Prob(C_n =i) = {n-1\choose i-1} p^{i-1} (1-p)^{n-i} , \qquad \mbox{for \ }n \ge  i \ge 1,
\label{Eq:exact}
\end{equation}  
and, of course, $C_0 = 0$.
\begin{lemma}
\label{Lem:Xn0}
The probability that fewer than $c\ge2$ questions are asked by time~$n\ge 2$ is 
$$\Prob(K_{n,c} = 0) =  {n-2 \choose c-2} p^{c-2}  (1-p)^{n-c+1}+  \sum_{r=1}^{c-2} {n-2\choose r-1} p^{r-1}(1-p)^{n-r-1} .$$
\end{lemma}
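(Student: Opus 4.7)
The plan is to identify the event $\{K_{n,c}=0\}$ with an event involving only $C_n$, then condition on the last trigger $B_{n-1}$, and invoke the exact distribution of $C_n$ recorded in~\eqref{Eq:exact}. For the first step, since the colors are labeled in their order of appearance in $\mathcal{S}$, color $c$ has been registered by time $n$ if and only if at least $c$ distinct colors have appeared, i.e.\ $C_n\ge c$. Hence $\{K_{n,c}=0\}=\{C_n\le c-1\}$, and it suffices to compute $\Prob(C_n\le c-1)$.

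Next, using the recursion $C_n=C_{n-1}+B_{n-1}$ and the independence of $B_{n-1}$ from the earlier triggers, I would write
$$\Prob(C_n\le c-1)=p\,\Prob(C_{n-1}\le c-2)+(1-p)\,\Prob(C_{n-1}\le c-1).$$
Splitting $\Prob(C_{n-1}\le c-1)=\Prob(C_{n-1}\le c-2)+\Prob(C_{n-1}=c-1)$ and combining the coefficients of $\Prob(C_{n-1}\le c-2)$ via $p+(1-p)=1$ leaves
$$\Prob(K_{n,c}=0)=\Prob(C_{n-1}\le c-2)+(1-p)\,\Prob(C_{n-1}=c-1).$$

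To finish, I would substitute the binomial expression~\eqref{Eq:exact} applied at time $n-1$, namely $\Prob(C_{n-1}=i)=\binom{n-2}{i-1}p^{i-1}(1-p)^{n-1-i}$. The point-mass contribution $(1-p)\Prob(C_{n-1}=c-1)$ produces the leading term $\binom{n-2}{c-2}p^{c-2}(1-p)^{n-c+1}$, and $\Prob(C_{n-1}\le c-2)$ unfolds into the sum $\sum_{r=1}^{c-2}\binom{n-2}{r-1}p^{r-1}(1-p)^{n-r-1}$. The only genuinely non-routine step is the event equivalence $\{K_{n,c}=0\}=\{C_n\le c-1\}$, which relies on the order-of-appearance labeling convention; once that is in hand, the rest is direct binomial bookkeeping, and the boundary case $c=2$ (with its empty sum) collapses correctly to $\Prob(K_{n,2}=0)=(1-p)^{n-1}$, providing a useful sanity check.
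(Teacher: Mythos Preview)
Your proof is correct and follows essentially the same approach as the paper: both arguments reduce $\Prob(K_{n,c}=0)$ to the identical decomposition $\Prob(C_{n-1}\le c-2)+(1-p)\,\Prob(C_{n-1}=c-1)$ and then substitute the binomial distribution~\eqref{Eq:exact}. The only cosmetic difference is that you condition on the last trigger $B_{n-1}$ after explicitly invoking $\{K_{n,c}=0\}=\{C_n\le c-1\}$, whereas the paper conditions directly on the values of $C_{n-1}$; either route yields the same intermediate identity in one line.
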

\begin{proof}
For $K_{n,c}$ to be 0 at time $n$, 
it must have not appeared at time $n-1$. If $C_{n-1} < c$,  the event $K_{n,c} = 0$ occurs,
no matter what. However, if $C_{n-1} = c-1$, color $c-1$ is in the urn, for $K_{n,c}$ to stay at 0, the next Bernoulli random variable, $B_{n-1}$, must fail (no triggering). We have 
\begin{align*} 
\Prob(K_{n,c} = 0) 
         &= \sum_{r=1}^{c-1} \Prob(K_{n,c} = 0 \given C_{n-1}=r)\, 
              \Prob(C_{n-1}=r)\\
         &= \Big(\sum_{r=1}^{c-2} \Prob(K_{n,c} = 0 \given C_{n-1}=r)\, 
              \Prob(C_{n-1}=r)\Big) \\
          &\qquad\qquad {}+ \Prob(K_{n,c} = 0 \given C_{n-1}=c-1)\, 
              \Prob(C_{n-1}=c-1)\\
               &= \Big(\sum_{r=1}^{c-2}\Prob(C_{n-1}=r)\Big) + (1-p)\, \Prob(C_{n-1}=c-1)  .
\end{align*} 
The probabilities in this expression are given in~(\ref{Eq:exact}). 
\end{proof}
\subsection{Recursion of the average number of balls of a specific color}
\label{Sec:freq}
Recall that in Simon urn $K_{n,c}$ is the number of balls of color~$c$ at time $n$. 
It is clear that color~$c$~does not appear before time $c$. So, we have
$K_{n,c} = 0$, for $n < c$ and color~$c$ cannot be born in the urn at time~$n$, unless balls of color $c-1$ 
exist in the urn (i.e., unless $K_{n-1,c-1} > 0$).   

At time $c$, a ball of color $c$ may or may not appear. 
It appears
only if the history sequence $\cal S$ begins with $12\ldots c-1$ (after $c-1$ successful triggers). 
Recall that the first trigger is always successful $(p_0=1)$. 
So, $K_{c,c}$ is a Bernoulli
random variable with success probability $1 \times p^{c-2}\times p=p^{c-1}$. Hence,  we have $\E[K_{c,c}] = p^{c-1}$,
an expression needed as a boundary condition for the recursion of the average. 
The solution is in terms of the gamma function~$\Gamma(.)$.
\begin{lemma}
\label{Lem:ave}
In Simon urn,
for $n\ge c\ge 1$, the average number of times question $c$ is asked is
\begin{align*}
\E[K_{n,c}] =
      & \frac{\Gamma(n-p+1)}{\Gamma(n)}\Big(\frac {p^{c-1}}{(1-p)^c} \sum_{i=c+1}^n {{i-2} 
      \choose c-2} \,
      \frac{(1-p)^ i\, \Gamma(i)}{\Gamma(i-p+1)}\\
      &\qquad \qquad {}+
                      \frac{p^{c-1}\,\Gamma(c)}{\Gamma(c-p+1)}\Big).
\end{align*}  
\end{lemma}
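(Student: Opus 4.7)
The plan is to derive a linear first-order recursion for $a_n := \E[K_{n,c}]$ in $n$ with $c$ fixed, and then solve it by the integrating-factor / telescoping method. I would condition on $\mathcal{F}_n = \sigma(S_1,\dots,S_n)$ and split on the trigger $B_n$: if $B_n=1$ (probability $p$), a brand-new color is introduced, and it equals the target color $c$ precisely when $C_n = c-1$; if $B_n=0$ (probability $1-p$), one of the $n$ balls in the urn at time $n$ is drawn uniformly, and $K_{n,c}$ increases by one with conditional probability $K_{n,c}/n$. Taking expectations and plugging in the exact formula \eqref{Eq:exact} for $\Prob(C_n=c-1)$ yields
\[
a_{n+1} \;=\; \frac{n+1-p}{n}\,a_n \;+\; p^{c-1}\binom{n-1}{c-2}(1-p)^{n-c+1}, \qquad n \geq c.
\]

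Next I would observe that the homogeneous part of the recursion has solution proportional to $\Gamma(n+1-p)/\Gamma(n)$, so multiplying through by the integrating factor $\Gamma(n+1)/\Gamma(n+2-p)$ and exploiting the identity
\[
\frac{\Gamma(n+1)}{\Gamma(n+2-p)}\cdot\frac{n+1-p}{n} \;=\; \frac{\Gamma(n)}{\Gamma(n+1-p)}
\]
converts the equation into a telescoping relation for $v_n := \tfrac{\Gamma(n)}{\Gamma(n+1-p)}\,a_n$. Summing from index $c$ up to $n-1$ and reindexing $i=n+1$ gives
\[
v_n \;=\; v_c \;+\; p^{c-1}\sum_{i=c+1}^{n} \binom{i-2}{c-2}\frac{(1-p)^{i-c}\,\Gamma(i)}{\Gamma(i-p+1)}.
\]

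Then I would invoke the boundary condition $\E[K_{c,c}]=p^{c-1}$, which is justified in the paragraph preceding the lemma: color $c$ first appears at time $c$ iff the first $c-1$ triggers all succeed, an event of probability $p^{c-1}$. This gives $v_c = p^{c-1}\Gamma(c)/\Gamma(c-p+1)$. Multiplying back by $\Gamma(n-p+1)/\Gamma(n)$, and rewriting $(1-p)^{i-c} = (1-p)^{i}/(1-p)^c$ so as to pull $1/(1-p)^c$ outside the sum, produces exactly the expression in the statement of the lemma.

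The main obstacle is administrative rather than conceptual. One must check that the recursion is valid already from $n=c$ (so that $\E[K_{c,c}]=p^{c-1}$ legitimately serves as the initial condition, and the displayed inhomogeneous term $p^{c-1}\binom{n-1}{c-2}(1-p)^{n-c+1}$ correctly equals $p\,\Prob(C_n=c-1)$ at every relevant~$n$), and the Gamma-function shifts in the integrating factor must be tracked carefully so that the telescoping recovers both the summand $\Gamma(i)/\Gamma(i-p+1)$ and the boundary term $\Gamma(c)/\Gamma(c-p+1)$ in the exact form asserted.
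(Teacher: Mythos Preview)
Your proposal is correct and follows essentially the same route as the paper: derive the linear recursion $\E[K_{n,c}]=\bigl(1+\tfrac{1-p}{n-1}\bigr)\E[K_{n-1,c}]+p\,\Prob(C_{n-1}=c-1)$ by conditioning on the history and splitting on the trigger, then solve it with the boundary value $\E[K_{c,c}]=p^{c-1}$ and rewrite the resulting products as ratios of Gamma functions. The only cosmetic difference is that you phrase the solution step as an integrating-factor/telescoping argument whereas the paper quotes the closed-form solution of $a_n=g_n a_{n-1}+h_n$; these are the same computation.
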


\begin{proof}
We compute $\E[K_{n,c}]$ starting from a stochastic recurrence:
$$K_{n,c} = K_{n-1,c} +\indicator_{n,c},$$
where $\indicator_{n,c}$ is an indicator of the event of an increase in color $c$ by one ball at time $n$.
Let us  condition the stochastic recurrence  on $\field_{n-1}$, the sigma field
generated by the first $n-1$ insertions. Think of $\field_{n-1}$ as all the information available
at time time $n-1$, and obtain
$$\E\big[K_{n,c} \given \field_{n-1} \big] = K_{n-1,c} +
        \E\big[\indicator_{n,c} \given \field_{n-1} \big].$$ 

According to the construction algorithm of Simon urn, we have cases. 
When there is no trigger, we choose from the existing balls, proportionately according
to the existing masses. Alternatively, when there is a trigger color~$c$ appears, but only if
there are balls of colors $1,2, \ldots,c-1$ in the urn (otherwise, the trigger is introducing a color 
numbered lower than $c$). Expressed conditionally, this argument gives      
$$\E\big[\indicator_{n,c} \given \field_{n-1} \big] = 
\begin{cases}
            \frac {K_{n-1,c}}{n-1}, & \mbox {if\ } B_{n-1}=0\ \mbox{and}\ K_{n-1,c} >0;\\
            1, &  \mbox {if\ } B_{n-1}=1\ \mbox{and}\ C_{n-1} = c-1;\\
            0, &\mbox {otherwise}.
\end{cases}$$
Now, take a double expectation to obtain an unconditional recursion: 
  $$\E\big[K_{n,c}] = \Big(1 
           + \frac {1-p}{n-1}\Big) \E\big[K_{n-1,c}] + p\,\Prob(C_{n-1} = c-1).$$
With $c$ held fixed,  this linear recursion is  in the standard form
$$a_n = g_n a_{n-1} + h_n,$$
with solution\footnote{The product is interpreted as 1, when it does not exist. In particular, at $j=n$, 
the summand is $h_n$.}
$$a_n = \sum_{i=c+1}^n h_i\prod_{j=i+1}^n g_j + a_c \prod_{j=c+1}^n g_j.$$
Specialized to Simon FAQ urn, 
with 
$$a_n = \E[K_{n,c}], \qquad g _n = 1 + \frac {1-p}{n-1},$$
and   
$$h_n = p\,\Prob(C_{n-1} = c-1), 
     \qquad a_{c} =\E[K_{c,c}] = 
    p^{c-1},$$
we obtain the solution
$$\E[K_{n,c}]  = p \sum_{i=c+1}^n \Prob(C_{i-1} = c-1) \prod_{j=i+1}^n  \Big(\frac {j-p}{j-1}\Big)
          + 
           p^{c-1}\prod_{j=c+1}^n \Big(\frac {j-p}{j-1}\Big).$$
The probabilities for $C_{i-1}$ are given 
in~(\ref{Eq:exact}) and 
the products can be completed to gamma functions as in the statement of the lemma. 
Note that, for the first question, $c=1$, the probabilities $\Prob(C_{i-1} = c-1) = \Prob(C_{i-1} = 0)$ are
all $0$, as color $1$ exists for all $n\ge 1$. This is also consistent with the standard interpretation ${i-2\choose -1} =0$, for any $i-2\ge 0$.
\end{proof}
For color $c=1$, we can remove the sum from the solution,
greatly simplifying it. An asymptotic equivalent can be obtained via Stirling approximation 
of the gamma function. 
\begin{cor}
\label{Cor:Xn1}
In the Simon FAQ model, the average
number of times the first question is asked is give by
\begin{align*}
\E[K_{n,1}] &=\frac{\Gamma(n+1-p)} {\Gamma(2-p)\, \Gamma(n)} =
\frac{n^{1-p}} {\Gamma(2-p)} + O\Big(\frac1 {n^p}\Big), \qquad \mbox {as\ } \to\infty.
\end{align*}
\end{cor}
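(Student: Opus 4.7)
\medskip
\noindent\textbf{Proof proposal.} The plan is to specialize Lemma~\ref{Lem:ave} to $c=1$, collapse the resulting expression, and then apply a standard gamma-ratio asymptotic.

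First, I would set $c=1$ in the formula of Lemma~\ref{Lem:ave}. In the summation over $i$, the combinatorial factor is $\binom{i-2}{c-2}=\binom{i-2}{-1}$, which vanishes for every $i\ge c+1=2$. This is precisely the observation already made at the end of the proof of Lemma~\ref{Lem:ave}: color~$1$ is present in the urn from time~$1$ onward, so $\Prob(C_{i-1}=0)=0$ for every $i\ge 2$, and no contribution to $\E[K_{n,1}]$ comes from the sum. Only the boundary term $p^{c-1}\Gamma(c)/\Gamma(c-p+1)$ survives, with $p^{0}\Gamma(1)/\Gamma(2-p)=1/\Gamma(2-p)$. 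Multiplying by the prefactor $\Gamma(n+1-p)/\Gamma(n)$ gives the exact identity
\[
\E[K_{n,1}] \;=\; \frac{\Gamma(n+1-p)}{\Gamma(2-p)\,\Gamma(n)}.
\]

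For the asymptotic, I would invoke the standard gamma-ratio expansion (which follows from Stirling's formula)
\[
\frac{\Gamma(n+a)}{\Gamma(n+b)} \;=\; n^{a-b}\Bigl(1+\frac{(a-b)(a+b-1)}{2n}+O(n^{-2})\Bigr),
\]
with $a=1-p$ and $b=0$, obtaining $\Gamma(n+1-p)/\Gamma(n)=n^{1-p}+O(n^{-p})$. Dividing by the constant $\Gamma(2-p)$ (which is finite and nonzero because $2-p>1>0$) yields
\[
\E[K_{n,1}] \;=\; \frac{n^{1-p}}{\Gamma(2-p)}+O\!\left(\frac{1}{n^{p}}\right),\qquad n\to\infty.
\]

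There is no real obstacle here; the only point requiring care is the convention on the binomial coefficient $\binom{i-2}{-1}$, which I would flag explicitly so that the reader sees immediately why the sum in Lemma~\ref{Lem:ave} disappears at $c=1$. The Stirling step is entirely routine, and the error term $O(n^{-p})$ is the next-order contribution from the gamma-ratio asymptotic scaled by $1/\Gamma(2-p)$.
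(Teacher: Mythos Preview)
Your proposal is correct and follows exactly the approach the paper intends: specialize Lemma~\ref{Lem:ave} to $c=1$, use the convention $\binom{i-2}{-1}=0$ (already flagged at the end of that lemma's proof) to kill the sum, and then apply Stirling's approximation to the surviving gamma ratio. The paper states this route in the sentence preceding the corollary rather than writing out a formal proof, so your version is simply a more explicit rendering of the same argument.
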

For $c > 1$,
it is harder to get a closed-form expression for $\E[K_{n,c}]$. However, we can get asymptotics
with rates of convergence via Stirling approximation.
\begin{cor}
\label{Cor:Xnge1}
For $c \ge 2$, 
as $n \to \infty$, we have
$$
\E[K_{n,c}] \sim p^{c-1}\Big(\frac 1 {(1-p)^c}  \Lambda_\infty(c,p)+
                      \frac {\Gamma(c)}{\Gamma(c-p+1)}\Big) n^{1-p},
$$
where $\Lambda_\infty(c,p)$ is the convergent series
$ \sum_{i=c+1}^\infty {{i-2} 
      \choose c-2} \,
      \frac{ (1-p)^i\, \Gamma(i)}{\Gamma(i+1-p)}$.
\end{cor}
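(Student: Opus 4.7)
The plan is to apply Stirling's approximation to the prefactor and show that the series truncated at $n$ tends to its value at $\infty$ fast enough to not disturb the leading order.

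First, I would isolate the prefactor $\Gamma(n-p+1)/\Gamma(n)$ from the expression in Lemma~\ref{Lem:ave}. By Stirling's formula (or the classical identity $\Gamma(n+a)/\Gamma(n+b)=n^{a-b}(1+O(1/n))$), we have
\[
\frac{\Gamma(n-p+1)}{\Gamma(n)} = n^{1-p}\Bigl(1+O\!\bigl(\tfrac{1}{n}\bigr)\Bigr),
\qquad n\to\infty.
\]
Thus the prefactor contributes precisely the $n^{1-p}$ growth, while the bracketed quantity in Lemma~\ref{Lem:ave} needs only to be shown to converge.

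Next, I would handle the partial sum
\[
\Lambda_n(c,p) \;=\; \sum_{i=c+1}^{n} \binom{i-2}{c-2} \frac{(1-p)^i\,\Gamma(i)}{\Gamma(i-p+1)}.
\]
Using $\binom{i-2}{c-2} = O(i^{c-2})$ and $\Gamma(i)/\Gamma(i-p+1) = i^{p-1}(1+O(1/i))$ by Stirling, the general term is of order $i^{\,c+p-3}(1-p)^i$. Since $0<1-p<1$, the exponential factor dominates the polynomial factor, so the series is absolutely convergent, and therefore $\Lambda_n(c,p) \to \Lambda_\infty(c,p) \in (0,\infty)$ as $n\to\infty$. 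The constant term $p^{c-1}\Gamma(c)/\Gamma(c-p+1)$ inside the bracket does not depend on $n$.

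Combining these two ingredients in the exact formula of Lemma~\ref{Lem:ave} yields
\[
\E[K_{n,c}] \;\sim\; n^{1-p}\,p^{c-1}\Bigl(\frac{1}{(1-p)^c}\,\Lambda_\infty(c,p) + \frac{\Gamma(c)}{\Gamma(c-p+1)}\Bigr),
\]
which is the claim. The only mildly delicate step is verifying absolute convergence of $\Lambda_\infty(c,p)$, but this is immediate from the geometric decay provided by $(1-p)^i$ with $p\in(0,1)$; there is no real obstacle, and no rate better than $1/n$ is needed since the leading order is $n^{1-p}$ with $1-p<1$.
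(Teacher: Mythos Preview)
Your proposal is correct and follows essentially the same route as the paper: apply Stirling to the prefactor $\Gamma(n-p+1)/\Gamma(n)\sim n^{1-p}$, then show $\Lambda_n(c,p)\to\Lambda_\infty(c,p)$ by bounding the general term as $O(i^{c+p-3}(1-p)^i)$ and invoking the geometric decay of $(1-p)^i$. If anything, your identification of $(1-p)^i$ as the source of the decay is stated more cleanly than in the paper, which phrases it as ``the exponential decay in $i^{p-1}$'' before appealing to the ratio test.
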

\begin{proof}

We demonstrate that the series
$$\Lambda_n(c,p) := \sum_{i=c+1}^n {{i-2} 
      \choose c-2} \,
      \frac{ (1-p)^i\, \Gamma(i)}{\Gamma(i+1-p)}$$ 
converges, say to $\Lambda_\infty(c,p)$.
By Stirling approximation, we have
$$ \frac{\Gamma(i)}{\Gamma(i+1-p)} \sim i^{p-1}, \qquad \mbox{as\ }i \to \infty,$$
and
$${i-2 \choose c-2} \sim \frac 1{(c-2)!}\, i^{c-2},\qquad  \mbox{as\ }i \to \infty.$$
The exponential decay in $i^{p-1}$ dominates the polynomial growth in $i^{c-2}$. By the ratio test, $\Lambda_n(c,p)$ converge, as $\to\infty$. 

We can now assess the asymptotics of the exact expectation in Lemma~\ref{Lem:ave}.
Using Stirling approximation again, we get
$$
\E[K_{n,c}] =
       n^{1-p}\Big(1 + O\Big(\frac 1 n\Big) \Big)\Big(\frac 1{(1-p)^c} \Lambda_\infty (c,p)+
                      \frac {\Gamma(c)}{\Gamma(c-p+1)}\Big)p^{c-1}.
$$
\end{proof}
Corollaries~\ref{Cor:Xn1}--\ref{Cor:Xnge1} 
are consistent with Theorem~\ref{th-K}, which states that, for each color $c\geq 1$ (recall that we have $C_\infty=\infty$ when $p_n=p\neq 0$), we have $K_{n,c} 
\stackrel{a.s.}\sim K(c) n^{(1-p)}$, where $K(c)$ is  a suitable random variable with values in $(0,\infty)$. The exact distribution leads us to conclude that the expectation 
of $K(c)$ is $(\Lambda_\infty(c,p) /(1-p)^c+\Gamma(c)/\Gamma(c-p+1))p^{c-1})$.

\subsection{Dynamic variations on Simon urn}
\label{Sec:dynamic}
In Simon model, we considered the case $p_n = p$, a constant. For a 
time-dependent sequence $p_n$,
we can still say something about the frequencies of asking certain questions. For instance,
the same logic we used to derive the average of $K_{n,1}$ in the case of constant
$p_n$, for a more general sequence leads us to 
$$ \E[K_{n,1}] =  \frac {n-p_n} {n-1}\,  \E[K_{n-1,1}],$$
with the initial condition $\E[K_{1,1}] =1$,
giving the solution
$$\E[K_{n,1}]= \prod_{i=2}^n \frac {i-p_i} {i-1} .$$    
As $n\to \infty$,  an increasing sequence $p_n$ like $1 - 1/n\to 1$ gives
$$\E[K_{n,1}] \to  2.4281\ldots\ .$$ 
In contrast,
a decreasing sequence $p_n$ like $1/n \to 0$ gives
$$\frac 1 n\, \E[K_{n,1}] =\frac 1  {2n} (n + 1) \to \frac 1 2.$$

\section{Applications to real data}
We conducted an analysis on a real dataset from Amazon and corroborated the theory with multiple simulations.
\label{Sec:conc}
\subsection{Data description and preprocessing}
The analysis on real data is based on the Amazon Question and Answer (Q\&A) corpus, 
described in~\cite{wan-mcauley}. The datasets are publicly available and can be freely downloaded
from the project website: 

\url{https://cseweb.ucsd.edu/~jmcauley/datasets/amazon/qa/}.

This corpus contains user-generated questions and answers posted on Amazon product pages. 
We have selected three categories—\textit{Automotive}, \textit{Cell Phones and Accessories}, and \textit{Electronics}— that differ in theme, 
focus and user interaction patterns, and hence enable a comparative assessment of the proposed statistical methodology across distinct domains of consumer activity. 

The raw data required extensive preprocessing to ensure the syntactic validity of the textual content of each question, resulting in validated datasets comprising $n=88749$, $n=85053$, and $n=308932$ questions for the \textit{Automotive}, \textit{Cell Phones and Accessories}, and \textit{Electronics} categories, respectively.  
Since the question timestamp was not available, all validated entries were ordered chronologically according to the answer timestamp.

To analyze the semantic structure of the questions, after preprocessing, sparse features in the textual field \texttt{question} were filtered, retaining only the most informative terms for each category. 
As a result, each record (question) was represented by a set of numerical weights expressing the relevance of each informative term within that record relative to the rest of the dataset.
Finally, to cluster together semantically similar questions—corresponding to balls of the same color in the urn representation—an unsupervised $\kappa$-means algorithm was applied to the preprocessed datasets, with \(\kappa = 10000\), \(\kappa = 3000\), and \(\kappa = 40000\) clusters for the \textit{Automotive}, \textit{Cell Phones and Accessories}, and \textit{Electronics} datasets, respectively.

\subsection{Results}

The aim of this section is to illustrate that, for each of the three datasets considered, the empirical behavior is fully consistent with the triggered urn model introduced in this article. We show that the observed patterns, fitted curves, and estimated parameters closely match those predicted by a triggering urn with $p_n \propto n^{-(1-\theta)}$ and an update function $F$ satisfying
$$
F(x)= \rho x +\widetilde{\rho}\propto x+\eta, 
$$
where the parameters $\theta\in (0,1)$ and $\eta=\widetilde{\rho}/\rho>-1$ 
may vary across applications.
 
 From the theory developed in the previous sections, when $p_n \propto n^{-(1-\theta)} $ and $F(x)=\rho x+\widetilde{\rho}$, the following relations hold asymptotically (that is for large $n$):
\begin{itemize}
    \item[(i)] (\emph{Number of colors over time}) $\log_{10}(C_n)$ increases linearly with $\log_{10}(n)$, with slope equal to $\theta$;
    \item[(ii)] (\emph{Number of draws of a given color over time}) $\log_{10}(K_n(c))$ increases linearly with $\log_{10}(n)$, with slope equal to 1;
    \item[(iii)] (\emph{Frequency--rank curve}) $\log_{10}(z(r))$ decreases linearly with $\log_{10}(r)$, with slope equal to $-\alpha=-\theta^{-1}$.
\end{itemize}
Figures~\ref{fig:real_Automotive}, \ref{fig:real_CellPhones}, and \ref{fig:real_Electronics} show that the empirical patterns observed across all three datasets follow these linear log--log relations with remarkable accuracy.  
In particular, clear linear trends emerge for: the growth of the number of colors (top panels),  the number of draws per color over time (bottom-left panels) and the frequency-rank curves (bottom-right panels).
To highlight this match, Table~\ref{Table real data} reports a detailed comparison between the theoretical slopes of these linear relationships (expressed as functions of $\theta$) and the empirical slopes obtained via linear regression for each of the three datasets.
Since the true value of the parameter $\theta$ is not available in real data analysis, we use the slope estimated from regression (i) on $C_n$ as an estimate $\widehat{\theta}$ of $\theta$.\\

\begin{figure}[!htbp]
    \centering
    \includegraphics[scale=0.3]{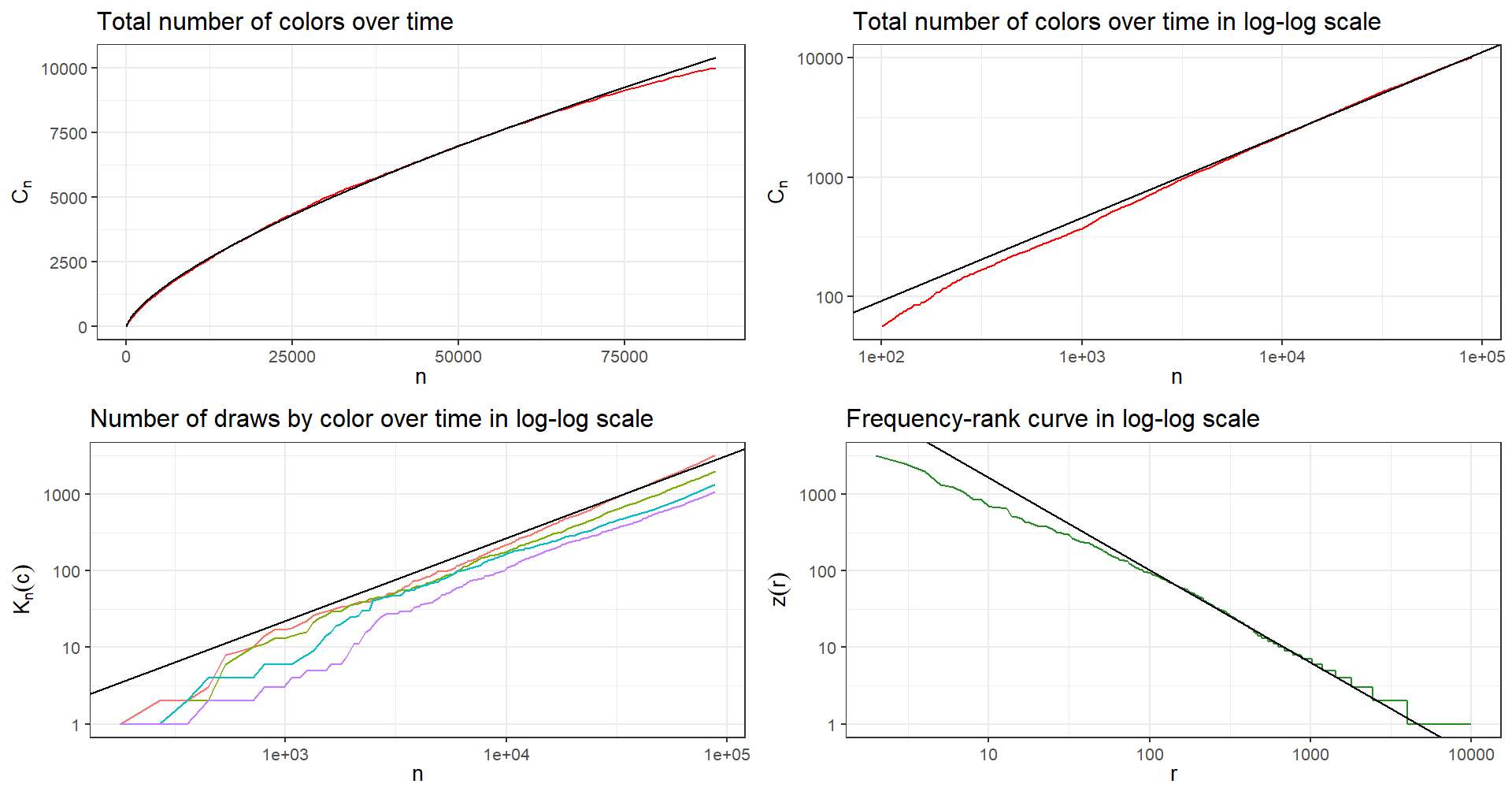}
    \caption{\textbf{Automotive dataset}: overview of the results.}
    \label{fig:real_Automotive}
\end{figure}
\begin{figure}[!htbp]
    \centering
    \includegraphics[scale=0.3]{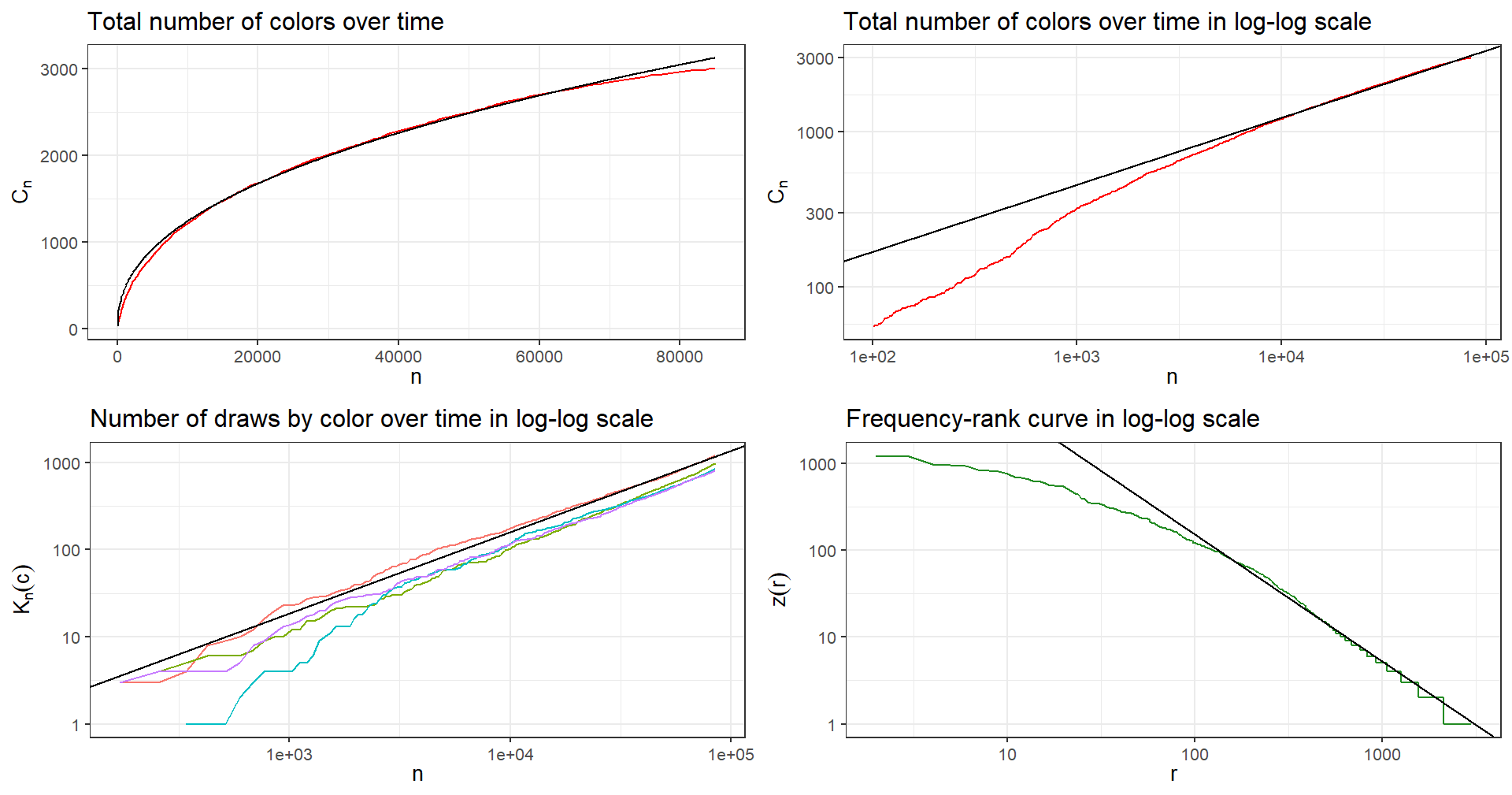}
    \caption{\textbf{Cell Phones dataset}: overview of the results.}
    \label{fig:real_CellPhones}
\end{figure}
\begin{figure}[!htbp]
    \centering
    \includegraphics[scale=0.3]{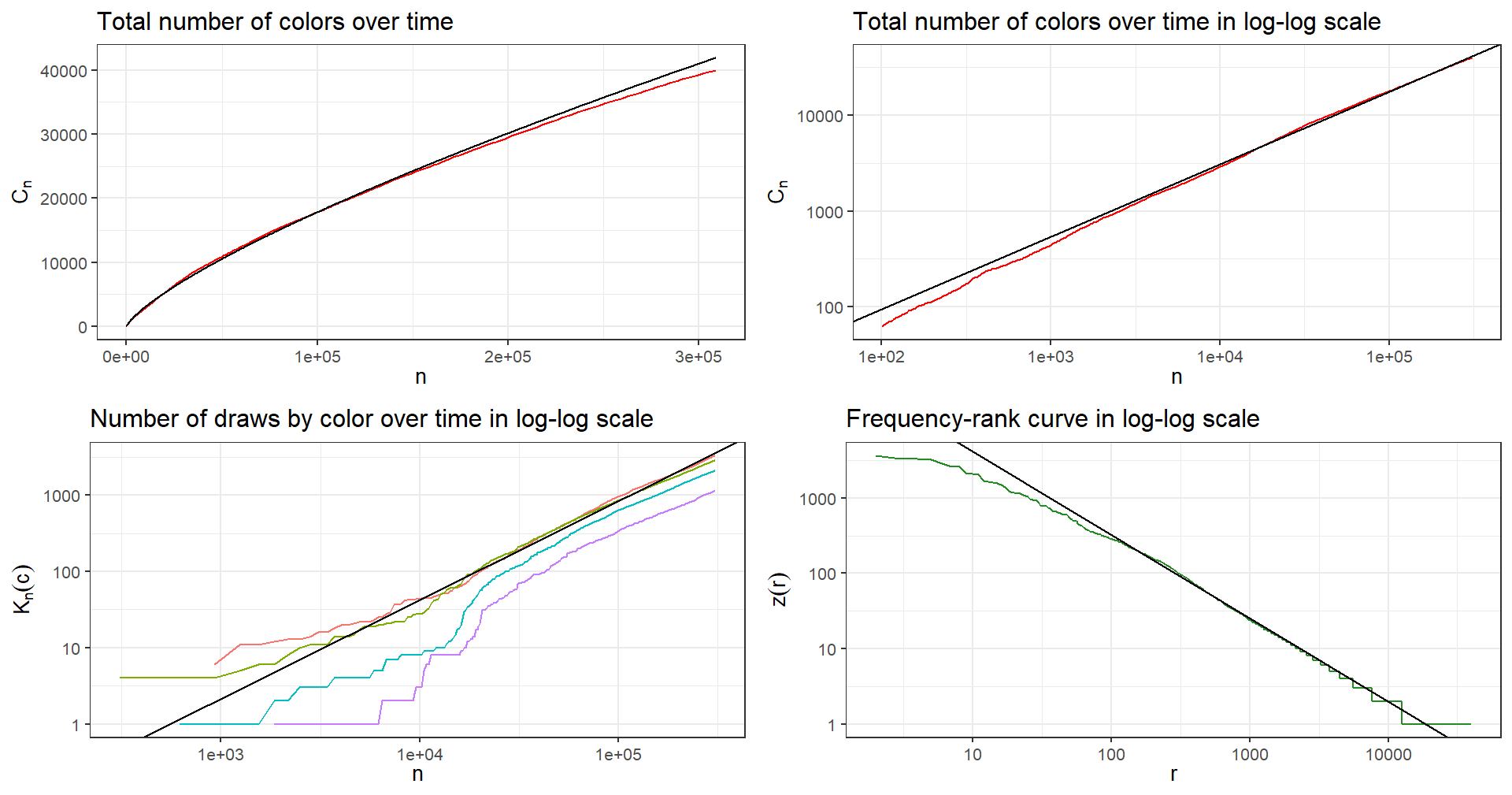}
    \caption{\textbf{Electronics dataset}: overview of the results.}
    \label{fig:real_Electronics}
\end{figure}

\renewcommand{\arraystretch}{1.5}
\begin{table}[htbp]
\caption{Comparison of the empirical and theoretical slopes for the linear regressions (i)-(ii)-(iii) fitted to the three datasets.}
\label{Table real data}
\centering
\small
\begin{tabular}{|l|l|ccc|}
\hline
Regression & slope & Automotive & Cell Phones & Electronics \\
\hline
$\log_{10}(C_n)$ vs.\ $\log_{10}(n)$
    & emp ($=\widehat{\theta}$)       & $0.70$ & $0.43$ & $0.76$ \\
\hline
\multirow{2}{*}{$\log_{10}(K_n(c))$ vs.\ $\log_{10}(n)$} 
    & emp      & $1.08$ & $0.93$ & $1.30$ \\
    & theo     ($=1$)               & $1.00$ & $1.00$ & $1.00$ \\
\hline
\multirow{2}{*}{\shortstack[c]{$\log_{10}(z(r))$ vs.\ $\log_{10}(r)$}} 
    & emp    & $-1.21$ & $-1.46$ & $-1.11$ \\
    & theo ($=-\widehat{\theta}^{-1}$) & $-1.43$ & $-2.32$ & $-1.32$ \\
\hline
\end{tabular}
\end{table}

\noindent{\bf Author contributions}\\
All the authors contributed equally to the present work.

\bigskip
\noindent{\bf Funding}\\
This research was supported by the “Resilienza Economica e Digitale” (RED)
project (CUP D67G23000060001) funded by the Italian Ministry of University
and Research (MUR) as “Department of Excellence” (Dipartimenti di Eccellenza
2023--2027, Ministerial Decree no. 230/2022).
\section*{Acknowledgments}
Irene Crimaldi is a member of the Italian Group “Gruppo Nazionale
per l’Analisi Matematica, la Probabilità e le loro Applicazioni” of the Italian Institute “Istituto Nazionale di Alta Matematica.”

Hosam Mahmoud completed this work while on sabbatical leave at The Catholic University of America  in Fall 2025. He thanks the Department of Mathematics and Statistics there for their gracious reception and hospitality and for an environment conducive to research under the leadership of Kiran Bhutani.

\appendix

\section{Simulation results}

\subsection{Asymptotic behaviors and parameter estimation}

The aim of this section is to illustrate how the main theoretical results established are reflected in simulation.
To this end, for both the scenarios described in Examples~\ref{ex-1} and \ref{ex-2}, we perform 10 independent simulations of the FAQ urn process, each run up to time $2\cdot 10^5$.
The parameter settings for the two scenarios are as follows:
\begin{itemize}
    \item[(1)] Example 3.1: $F(x)=\rho x+\widetilde{\rho}$ with $\rho=1$,   
    different values 
    for $\widetilde{\rho}$ and $p_n=0.3$;
    \item[(2)] Example 3.2: $F(x)=\rho x + \widetilde{\rho}$  with $\rho=1$, $\widetilde{\rho} =0$ and $p_n=n^{-0.3}$.
\end{itemize}

For each scenario, we plot the number of draws of each color, $K_n(c)$, over time, as well as the frequency--rank curve $z(r)$.
We omit the plot of the total number of colors $C_n$ because its behavior is directly reflected in the growth of $K_n(c)$.
Similarly, we do not report the color--frequency distribution $q(k)$, since it is always closely related to the frequency--rank curve $z(r)$.
Figures~\ref{fig:Example_1} and \ref{fig:Example_2} display the simulation results.
In each figure, the black line represents the fitted curve obtained via linear regression, which closely matches the simulated curves. In order to highlight this match, in Table~\ref{Table simulation data} we compare the theoretical slopes proven in this article with the empirical slopes obtained by fitting the simulated data using linear regressions. Specifically, the regressions reported in Table~\ref{Table simulation data} are referred to the following asymptotic (for large $n$) relations:
\begin{itemize}
    \item[(i)] (\emph{Number of colors over time}) $\log_{10}(C_n)$ increases linearly with $\log_{10}(n)$;
    \item[(ii)] (\emph{Number of draws of a given color over time}) $\log_{10}(K_n(c))$ increases linearly with $\log_{10}(n)$;
    \item[(iii)] (\emph{Frequency--rank curve}) $\log_{10}(z(r))$ decreases linearly with $\log_{10}(r)$.
\end{itemize}

As shown in Table~\ref{Table simulation data}, all estimates obtained in simulations are in agreement with their theoretical counterparts.

\renewcommand{\arraystretch}{1.5}
\begin{table}[htbp!]
\caption{Comparison between the empirical and theoretical slopes for the linear regressions (i)-(ii)-(iii) in the frameworks of 
Examples~\ref{ex-1} and \ref{ex-2}.}
\label{Table simulation data}
\centering
\small
\begin{tabular}{|l|l|r|r|r|}
\hline
Regression & slope & Ex.~3.1 ($\widetilde{\rho}=0$) & Ex.~3.1 ($\widetilde{\rho}=1$) & Ex.~3.2\\
\hline
\multirow{2}{*}{\shortstack{$\log_{10}(C_n)$\\ vs\\ $\log_{10}(n)$}}
    & emp   & $1.002$ & $1.002$ & $0.700$\\
    & theo  & $1.000$ & $1.000$ & $\theta = 0.700$ \\
\hline
\multirow{2}{*}{\shortstack{$\log_{10}(K_n(c))$\\ vs\\ $\log_{10}(n)$}}
    & emp   & $0.697$ & $0.540$ & $0.960$\\
    & theo  & $1-p=0.700$ & $\frac{1-p}{1+p} = 0.538$ & $1.000$ \\
\hline
\multirow{2}{*}{\shortstack{$\log_{10}(z(r))$\\ vs\\ $\log_{10}(r)$}}
    & emp   & $-0.709$ & $-0.572$ & $-1.307$\\
    & theo  & $-(1-p) = -0.700$ & $-\frac{1-p}{1+p} = -0.538$ & $-\frac{1}{\theta} = -1.429$ \\
\hline
\end{tabular}
\end{table}

\begin{figure}[!htbp]
    \centering
    \includegraphics[scale=0.3]{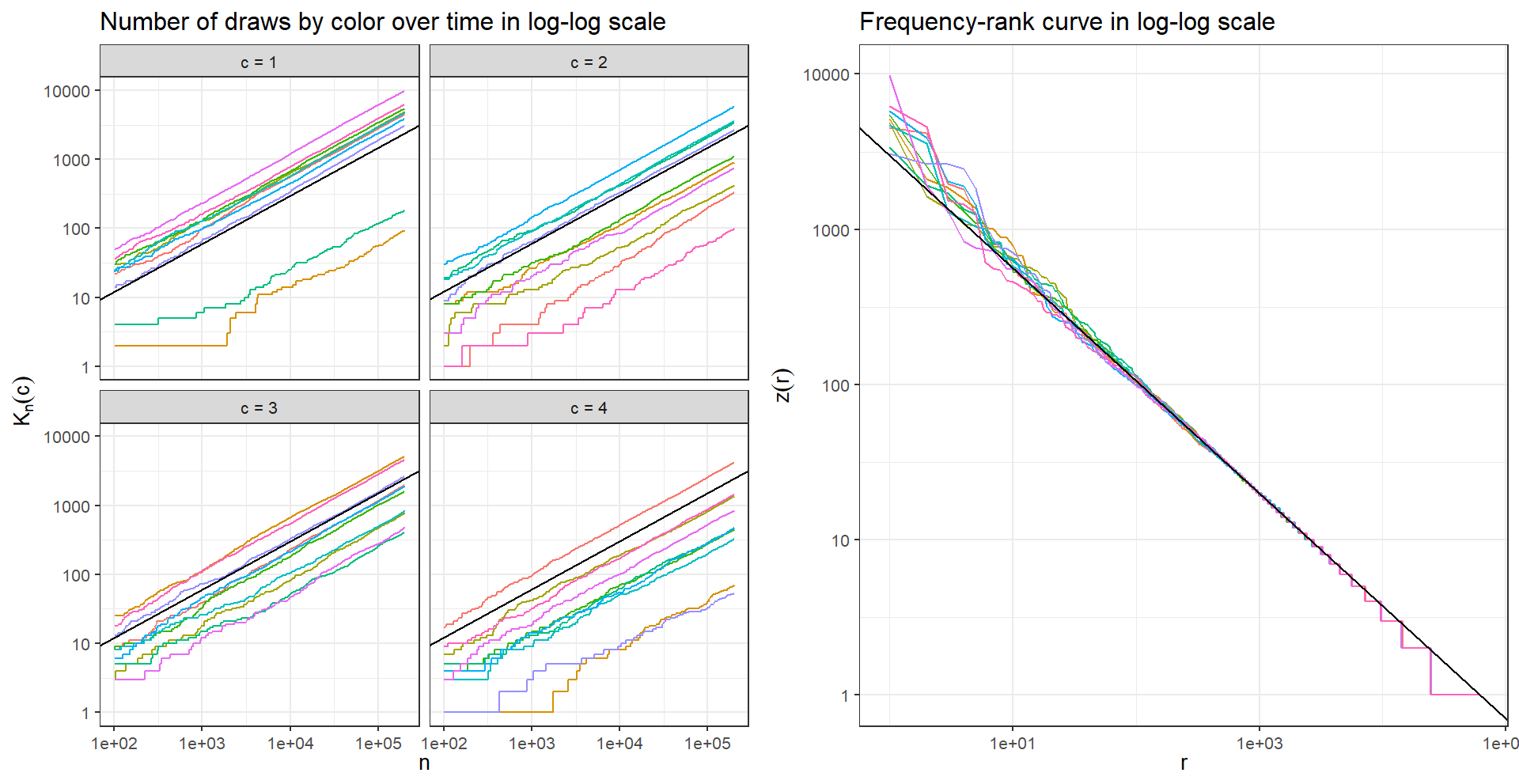}
    \caption{Example 3.1 with $F(x)=x$ and $p_n=0.3$: overview of the simulation results.}
    \label{fig:Example_1}
\end{figure}

\begin{figure}[!htbp]
    \centering
    \includegraphics[scale=0.3]{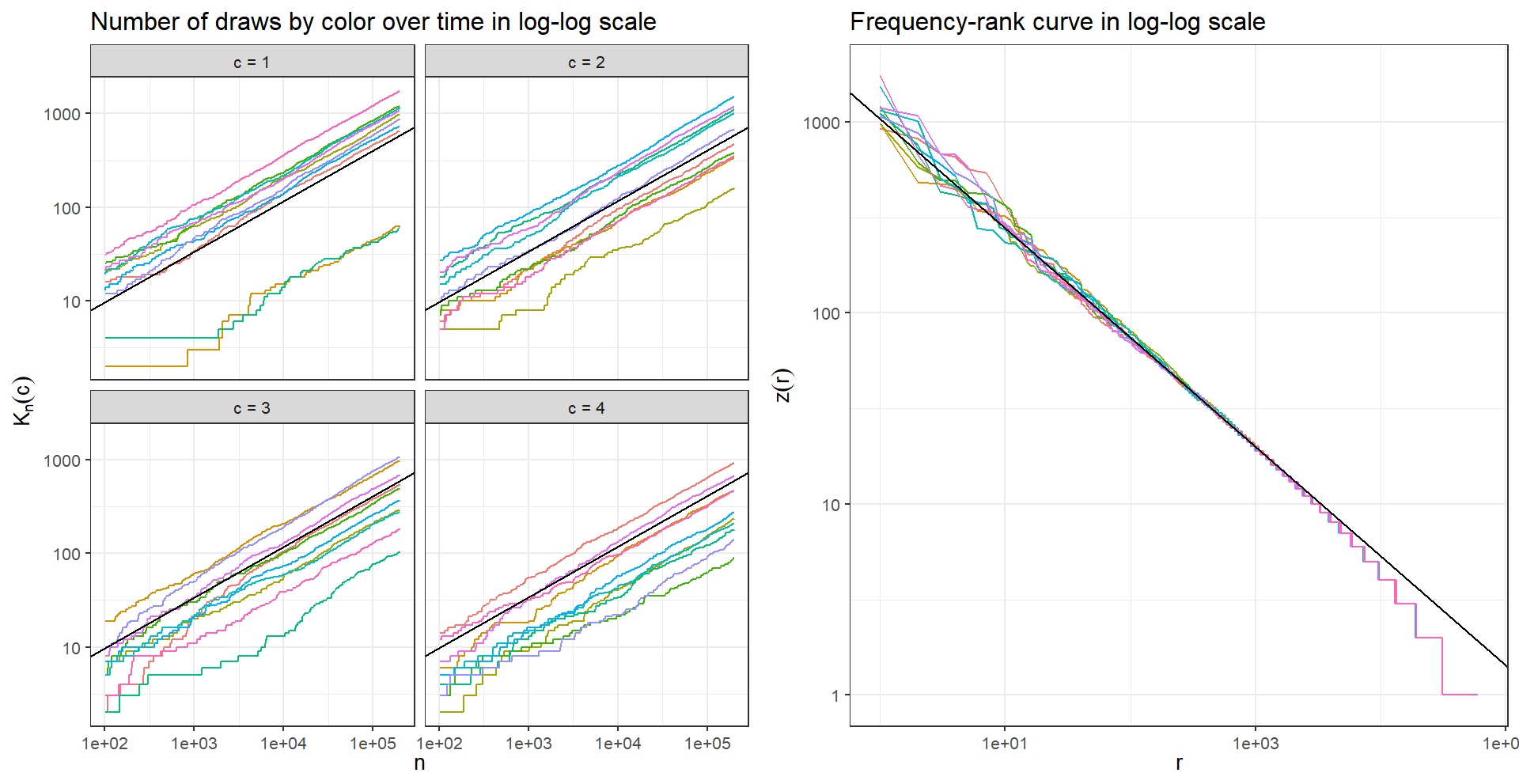}
    \caption{Example 3.1 with $F(x)=x+1$ and $p_n=0.3$: overview of the simulation results.}
    \label{fig:Example_1_remark}
\end{figure}

\begin{figure}[!htbp]
    \centering
    \includegraphics[scale=0.25]{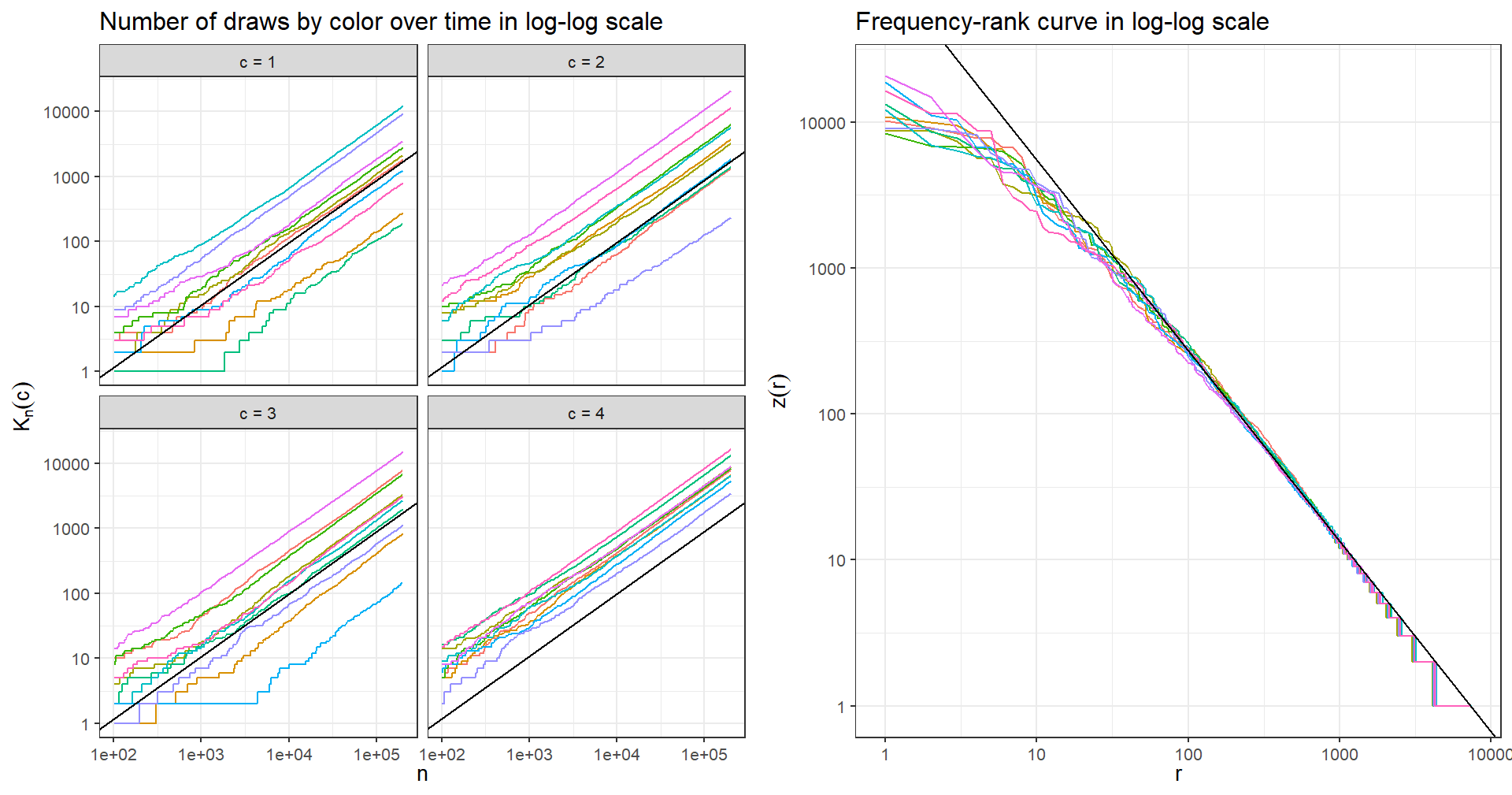}
    \caption{Example 3.2 with $F(x)=x$ and $p_n=n^{-0.3}$: overview of the simulation results.}
    \label{fig:Example_2}
\end{figure}

In addition, we took a look at the performance of the estimator of the ratio 
$\widetilde{\rho}/\rho$, defined in~\eqref{eq:ratio_rhotilderho_estimator}, within the 
framework of Example~\ref{ex-1}, i.e., $F(x)=\rho x + \widetilde{\rho}$ and $p_n = p$. 
Specifically, we set $p = 0.3$, $\rho = 1$, and considered 
two scenarios with 
$\widetilde{\rho} = 0$ and $=1$, respectively. For each scenario, we ran 
10 independent simulations of the FAQ urn process, each up to time $2 \cdot 10^5$. 
The parameter $p$ was estimated by $\widehat{p}$, defined as 10 raised to the power of the intercept obtained from the log--log regression (i) on~$C_n$, obtaining, for both the two  
scenarios, the estimate $\widehat{p}=0.293$. The exponent $\delta$ of the power-growth of $K_{n,c}$ was estimated as the empirical slope  $\widehat{\delta}$ in the log--log regression (ii). Using $\widehat{p}$ and $\widehat{\delta}$, we computed the value of the estimator $\widehat{(\widetilde{\rho}/\rho)}$ defined  in~\eqref{eq:ratio_rhotilderho_estimator}, obtaining, for the two  
scenarios, the estimates $0.049$ and $1.056$, respectively.
Finally, for the case $\widetilde{\rho} = 1$, we used this last estimate to recompute the slope $-\widehat{a}$ of the rank-frequency curve by replacing $\log_{10}(z(r))$ with $\log_{10}\big(z(r) + \widehat{\widetilde{\rho}/\rho}\,\big)$, obtaining the value  $-0.556$. This slightly improves the estimate $-0.572$ reported in Table~\ref{Table simulation data}.

\subsection{Short-term predictions}
We constructed simulations with results well-aligned with the theoretical presentation 
in Theorem~\ref{Thm:Kcn} and the combinatorial analysis in Subsection~\ref{Sec:freq}.
We ran simulations with $n=500$, $F(x) =x$ (as in Simon urn)  and fifty replications with various choices for $p$, recording the number of balls of the second color.

With $p = 0.95$, in one run we get
$$\frac {K_{500,2}} {(500)^{0.05}} = 0.9674430991,$$ 
while the predicted value in Corollary~\ref{Cor:Xnge1} is $0.9760312821\ldots$ . 

Note how low both
theory and practice produce a small expectation of the number of balls of color 2, as $p$ is quite high with a propensity to produce new balls at each epoch, rather than pick from competing existing colors.

At the other end of the spectrum, with $p = 0.1$, in one run we get 
$$\frac {K_{500,2}} {(500)^{0.9}} = 0.1096881568,$$ 
while the predicted value in Corollary~\ref{Cor:Xnge1} is $0.1360488175\ldots$ .

We then changed the parameters to $n=500$, $F(x) =2x+1$, $p=0.1$  and fifty replications.
In one run we get
$$\frac {K_{500,2}} {(500)^{0.9}} = 0.1399957466 ,$$ 
while the predicted value in Corollary~\ref{Cor:Xnge1} is $0.1360488175\ldots$ .

\end{document}